\begin{document}
\theoremstyle{plain}
\newtheorem{thm}{Theorem}[section]
\newtheorem{theorem}[thm]{Theorem}
\newtheorem{lemma}[thm]{Lemma}
\newtheorem{remark}[thm]{Remark}
\newtheorem{corollary}[thm]{Corollary}
\newtheorem{proposition}[thm]{Proposition}
\newtheorem{conjecture}[thm]{Conjecture}
\newtheorem{definition}[thm]{Definition}
\newtheorem{acknowledgement}[thm]{Acknowledgement}
\theoremstyle{definition}
\newtheorem{construction}[thm]{Construction}
\newtheorem{notations}[thm]{Notations}
\newtheorem{question}[thm]{Question}
\newtheorem{problem}[thm]{Problem}
\newtheorem{remarks}[thm]{Remarks}
\newtheorem{claim}[thm]{Claim}
\newtheorem{assumption}[thm]{Assumption}
\newtheorem{assumptions}[thm]{Assumptions}
\newtheorem{properties}[thm]{Properties}
\newtheorem{example}[thm]{Example}
\newtheorem{comments}[thm]{Comments}
\newtheorem{blank}[thm]{}
\newtheorem{observation}[thm]{Observation}
\newtheorem{defn-thm}[thm]{Definition-Theorem}
\renewcommand{\bar}{\overline}
\newcommand{\eps}{\varepsilon}
\newcommand{\pa}{\partial}
\renewcommand{\phi}{\varphi}
\newcommand{\wt}{\widetilde}
\newcommand{\oo}{\mathcal O}
\newcommand{\ka}{K\"ahler }
\newcommand{\kar}{K\"ahler-Ricci}
\newcommand{\C}{{\mathbb C}}
\newcommand{\Z}{{\mathcal{Z}_{m}}}
\newcommand{\R}{{\mathbb R}}
\newcommand{\N}{{\mathbb N}}
\newcommand{\Q}{{\mathbb Q}}
\newcommand{\pz}{\partial_z}
\newcommand{\pzb}{\partial_{\bar z}}
\newcommand{\M}{{\mathcal M}}
\newcommand{\T}{{\mathcal T}}
\newcommand{\NN}{{\mathcal N}}
\newcommand{\XX}{{\widetilde{\mathfrak X}}}
\newcommand{\pzs}{\partial_{z_s}}
\newcommand{\pzbs}{\partial_{\bar{z_s}}}
\newcommand{\pp}{{\mathbb P}}
\newcommand{\ke}{K\"ahler-Einstein }
\newcommand{\hh}{{\mathcal H}}
\newcommand{\kk}{{\mathcal K}}
\newcommand{\tei}{Teichm\"uller }
\newcommand{\h}{{\mathbb H}}
\renewcommand{\tilde}{\widetilde}
\newcommand{\p}{{\Phi}}
\newcommand{\g}{{\mathfrak g}}
\newcommand{\kkk}{{\mathfrak k}}
\newcommand{\mkp}{{\mathfrak p}}
\newcommand{\lb}{\left (}
\newcommand{\rb}{\right )}
\newcommand\independent{\protect\mathpalette{\protect\independenT}{\perp}}
\def\independenT#1#2{\mathrel{\rlap{$#1#2$}\mkern2mu{#1#2}}}

\def\dW{\mbox{diff\:}\times \mbox{Weyl\:}}
\def\End{\operatorname{End}}
\def\Hom{\operatorname{Hom}}
\def\Aut{\operatorname{Aut}}
\def\Diff{\operatorname{Diff}}
\def\im{\operatorname{im}}
\def\tr{\operatorname{tr}}
\def\Pr{\operatorname{Pr}}
\def\O{\mathcal{O}}
\def\CP{\mathbb{C}\mathbb{P}}
\def\P{\bf P}
\def\Q{\bf Q}
\def\R{\bf R}
\def\C{\mathbb{C}}
\def\H{\bf H}
\def\Hil{\mathcal{H}}
\def\proj{\operatorname{proj}}
\def\id{\mbox{id\:}}
\def\a{\alpha}
\def\b{\beta}
\def\c{\gamma}
\def\p{\partial}
\def\f{\frac}
\def\i{\sqrt{-1}}
\def\t{\tau}
\def\T{\mathcal{T}}
\def\Kahler{K\"{a}hler\:}
\def\w{\omega}
\def\X{\mathfrak{X}}
\def\K{\mathcal {K}}
\def\m{\mu}
\def\M{\mathcal {M}}
\def\v{\nu}
\def\D{\mathcal{D}}
\def\U{\mathcal {U}}
\def\Omegak{\frac{1}{k!}\bigwedge\limits^k\mu\lrcorner\Omega}
\def\Omegakp{\frac{1}{(k+1)!}\bigwedge\limits^{k+1}\mu\lrcorner\Omega}
\def\Omegakpp{\frac{1}{(k+2)!}\bigwedge\limits^{k+2}\mu\lrcorner\Omega}
\def\Omegakm{\frac{1}{(k-1)!}\bigwedge\limits^{k-1}\mu\lrcorner\Omega}
\def\Omegakmm{\frac{1}{(k-2)!}\bigwedge\limits^{k-2}\mu\lrcorner\Omega}
\def\Omegakk{\Omega_{i_1,i_2,\cdots,i_k}}
\def\Omegakkp{\Omega_{i_1,i_2,\cdots,i_{k+1}}}
\def\Omegakkpp{\Omega_{i_1,i_2,\cdots,i_{k+2}}}
\def\Omegakkm{\Omega_{i_1,i_2,\cdots,i_{k-1}}}
\def\Omegakkmm{\Omega_{i_1,i_2,\cdots,i_{k-2}}}
\def\mukm{\frac{1}{(k-1)!}\bigwedge\limits^{k-1}\mu}
\def\sumk{\sum\limits_{i_1<i_2<,\cdots,<i_k}}
\def\sumkm{\sum\limits_{i_1<i_2<,\cdots,<i_{k-1}}}
\def\sumkmm{\sum\limits_{i_1<i_2<,\cdots,<i_{k-2}}}
\def\sumkp{\sum\limits_{i_1<i_2<,\cdots,<i_{k+1}}}
\def\sumkpp{\sum\limits_{i_1<i_2<,\cdots,<i_{k+2}}}
\def\Omegakb{\Omega_{i_1,\cdots,\bar{i}_t,\cdots,i_k}}
\def\Omegakmb{\Omega_{i_1,\cdots,\bar{i}_t,\cdots,i_{k-1}}}
\def\Omegakpb{\Omega_{i_1,\cdots,\bar{i}_t,\cdots,i_{k+1}}}
\def\Omegakt{\Omega_{i_1,\cdots,\tilde{i}_t,\cdots,i_k}}


\title[]{A GLOBAL TORELLI THEOREM FOR CALABI-YAU MANIFOLDS}
      
        \author{Kefeng Liu}
        \dedicatory{Center of Math Sciences, Zhejiang University;
           Department of Mathematics, UCLA}
        \address{Center of Mathematical Sciences, Zhejiang University, Hangzhou, Zhejiang 310027, China;
                 Department of Mathematics,University of California at Los Angeles,
                Los Angeles, CA 90095-1555, USA}
        \email{liu@math.ucla.edu, liu@cms.zju.edu.cn}
          \author{Yang Shen}
        \address{ Center of Mathematical Sciences, Zhejiang University, Hangzhou, Zhejiang 310027, China  }
        \email{syliuguang2007@163.com}     
           \author{Andrey Todorov}
        \dedicatory{}
        \address{Department of Mathematics, University of California, Santa Cruz, CA 95064, USA}
        \email{todorov.andrey@gmail.com}

        \begin{abstract}
We describe the proof that the period map from the Torelli space of Calabi-Yau manifolds to the classifying space
of polarized Hodge structures is an embedding. The proof is based on the
constructions of holomorphic affine structure on the Teichm\"uller space and  Hodge metric completion of the Torelli space. A canonical global holomorphic section of the holomorphic $(n, 0)$ class on the Teichm\"uller space is constructed.

        \end{abstract}
    \maketitle
\tableofcontents

\section{Introduction}\label{Section introduction}
In this paper we study the global properties of the period map from
the Torelli space of Calabi-Yau
manifolds to the classifying space of polarized Hodge structures.
Our method is based on the existence of a holomorphic affine structure on the Teichm\"uller space, and the construction of a completion space of the Torelli space  with a compatible affine structure on it.

Although our method works for more general cases, for simplicity we
will restrict our discussion to Calabi-Yau manifolds. More precisely
a compact projective manifold $M$ of complex dimension $n$ with $n\geq 3$ is called
Calabi-Yau in this paper, if it has a trivial canonical bundle and
satisfies $H^i(M,\mathcal{O}_M)=0$ for $0<i<n$. 
We fix a lattice $\Lambda$ with an pairing $Q_{0}$, where $\Lambda$ is isomorphic to $H^n(M_{0},\mathbb{Z})/\text{Tor}$ for some fixed Calabi--Yau manifold $M_{0}$, and $Q_0$ is the intersection pairing.
A polarized and
marked Calabi-Yau manifold is a triple consisting of a Calabi-Yau
manifold $M$, an ample line bundle $L$ over $M$ and a marking $\gamma$ defined as an isometry of the lattices
$$\gamma :\, (\Lambda, Q_{0})\to (H^n(M,\mathbb{Z})/\text{Tor},Q).$$

Let $\mathcal{Z}_m$ be a smooth component containing $M$ of the moduli space of polarized Calabi--Yau manifolds with level $m$ structure with $m\geq 3$, which is constructed by Popp, Viehweg, and Szendr\"oi, for example in Section 2 of \cite{sz}. 
We define the Teichm\"uller space of Calabi--Yau manifolds to be the universal cover of $\mathcal{Z}_m$, which will be proved to be independent of the choice of $m$. 
The Teichm\"uller space will be denoted by $\T$.

The Torelli space $\mathcal{T}'$ is defined to be a connected component of the moduli space of 
polarized and marked Calabi-Yau manifolds containing $M$. 
The Torelli space is also a smooth cover of $\mathcal{Z}_m$. 
Therefore both $\mathcal{T}$ and $\T'$
are connected smooth complex manifolds of complex dimension
\begin{align*}
\dim_{{\mathbb{C}}}{\mathcal{T}'}=h^{n-1,1}(M)=N,
\end{align*}
where $h^{n-1,1}(M)=\dim_{\mathbb{C}} H^{n-1,1}(M)$ with
$H^{n-1,1}(M)$ the $(n-1,1)$-Dolbeault cohomology group of $M$. See
Section \ref{section construction of Tei} for details.

There is a universal family $\mathcal{X}_{\Z}\to \Z$ over $\Z$ constructed in \cite{sz}. Then we can pull back the universal family $\mathcal{X}_{\Z}\to \Z$ via the covering maps $\T\to \Z$ and $\T'\to \Z$ to get universal families $\phi:\, \U\to \T$ and $\phi':\, \U'\to \T'$ respectively.

Let $D$ be the classifying space of polarized Hodge structures of
the weight $n$ primitive cohomology of $M$. The period map
$\Phi':\,\mathcal{T}'\rightarrow D$ assigns to each point in
$\mathcal{T}'$ the corresponding Hodge structure of the fiber. The
main result of this paper is the proof of the following global
Torelli theorem:

\begin{thm}
\label{toerllimainintro} The period map $\Phi':\,
\mathcal{T}'\rightarrow D$ is injective.
\end{thm}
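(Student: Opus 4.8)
The plan is to exploit the holomorphic affine structure on $\mathcal{T}$ announced in the introduction, together with a Hodge-metric completion $\mathcal{T}^H$ carrying a compatible affine structure, to reduce the global injectivity of $\Phi$ to a statement about affine maps. First I would recall the local Torelli theorem: by Griffiths transversality and the Tian--Todorov unobstructedness of Calabi-Yau deformations, the differential $d\Phi$ is injective at every point, so $\Phi$ is a holomorphic immersion; moreover the tangent space $H^{n-1,1}(M)$ is identified with a graded piece of the Hodge decomposition, and contraction with the holomorphic $n$-form $\Omega$ gives the canonical isomorphism $H^1(M,T_M)\cong H^{n-1,1}(M)$. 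The point is that in the period coordinates coming from $D$, the expansion of the period map has a specific form: writing the Hodge filtration via a base point and the horizontal distribution, the ``affine coordinates'' on $\mathcal{T}$ are precisely the components of $d\Phi$ paired against a fixed frame of $F^{n-1}/F^n$, and the higher-order terms in the period expansion are forced by transversality to be polynomial (indeed, controlled by the cup product $H^{n-1,1}\otimes H^{n-1,1}\to H^{n-2,2}$, etc.).

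The key steps, in order, are: (1) construct explicitly the holomorphic affine structure on $\mathcal{T}$, i.e.\ an atlas of ``period-type'' charts whose transition functions are affine linear; this is where one uses that $H^i(M,\mathcal{O}_M)=0$ for $0<i<n$ so that the relevant graded pieces and the Yukawa-type tensors behave as in the classical Calabi-Yau case. (2) Show that $\Phi$, read in these affine charts and in the natural coordinates on the (horizontal slice of the) period domain $D$, is an \emph{affine map} — its second and higher derivatives vanish identically. This is the technical heart: one differentiates the Griffiths transversality relations repeatedly and uses the structure of the Hodge bundles to kill the nonlinear terms. (3) Pass to the Hodge metric completion $\mathcal{T}^H\supset\mathcal{T}$: since the Hodge metric is the pullback of a complete metric on a quotient of $D$ and $\Phi$ is a local isometry onto its image, $\Phi$ extends to $\overline{\Phi}\colon\mathcal{T}^H\to D$, and by continuity $\overline{\Phi}$ is still affine with respect to the extended affine structure. (4) An affine immersion between connected affine manifolds of the same... well, here $\mathcal{T}^H$ and the image need not have equal dimension, but $\overline\Phi$ being affine and an immersion, together with completeness of $\mathcal{T}^H$, forces $\overline\Phi$ to be injective (an affine map injective near a point and with everywhere-injective differential on a complete affine manifold is globally injective, since two points with the same image would be joined by a segment collapsed to a point, contradicting immersivity). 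Restricting back to $\mathcal{T}\hookrightarrow\mathcal{T}^H$ gives injectivity of $\Phi$.

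I expect the main obstacle to be step (2): proving that the period map is genuinely affine, not merely that a natural affine structure exists. A priori Griffiths transversality only constrains the first-order behavior of $\Phi$ relative to the filtration; showing the higher jets vanish requires choosing the affine charts on $\mathcal{T}$ with great care (they must be the ones in which the Hodge bundles trivialize in a flat way) and then a somewhat delicate induction on the Hodge length, repeatedly applying $\bar\partial$ and the transversality relation and invoking the vanishing $H^i(M,\mathcal{O}_M)=0$ to discard obstruction terms. A secondary difficulty is the construction and basic properties of $\mathcal{T}^H$: one must check it is a complex manifold (not merely a metric completion), that the affine structure and the period map both extend holomorphically across the added boundary points, and that $\mathcal{T}$ sits inside it in a way compatible with everything — this is where completeness of the Hodge metric (a theorem of Griffiths--Schmid type, available since $D$ carries a natural invariant metric with the relevant completeness along horizontal directions) is used essentially.
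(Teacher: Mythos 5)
Your overall architecture is the paper's: Kuranishi/period-type affine charts on $\mathcal{T}$, affineness of a map built from $\Phi$, a completion $\mathcal{T}^H$ with a compatible affine structure, and a straight-line-plus-completeness argument to upgrade local injectivity to global injectivity. But your step (2) is wrong as stated, and it is the step you yourself identify as the "technical heart." The period map itself is \emph{not} affine in the flat coordinates, and no induction on the Hodge length will make its higher jets vanish: the $H^{n-2,2}$-component of the canonical family of $(n,0)$-classes is genuinely quadratic, $\tfrac12\sum_{i,j}\tau_i\tau_j[\mathbb{H}(\phi_i\wedge\phi_j\lrcorner\Omega_p)]$ (the Yukawa coupling), and the deeper blocks are higher-degree polynomials. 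What is true --- and all that is needed --- is much simpler: by the expansion $[\Omega^c_p(\tau)]=[\Omega_p]+\sum_i\tau_i[\phi_i\lrcorner\Omega_p]+A(\tau)$ with $A(\tau)\in\bigoplus_{k\geq2}H^{n-k,k}(M_p)$, the composition $\Psi=P\circ\Phi$ of the period map with the linear projection of $N_+\cap D$ onto the single block $\mathrm{Hom}(H^{n,0}_p,H^{n-1,1}_p)\cong\mathbb{C}^N$ is \emph{exactly} the flat coordinate $\tau$ in each Kuranishi chart, hence affine and a local embedding; one then runs the completeness/straight-line argument for $\Psi^H$ rather than for $\Phi^H$, and injectivity of $\Phi$ follows trivially since $\Psi$ factors through $\Phi$. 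So you should replace "show $\Phi$ is affine" by "project away the nonlinear blocks and show the projection is affine."

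The second genuine gap is in step (3)--(4). The raw metric completion of $\mathcal{T}$ need not be a complex manifold, and you cannot extend the affine structure or the period map "by continuity" across an unknown boundary. The paper instead completes the quasi-projective moduli space $\mathcal{Z}_m$ (using Viehweg's compactification, so that the complement $\mathcal{Z}^H_m\setminus\mathcal{Z}_m$ has complex codimension at least one and the Riemann extension theorem applies to extend $\Phi_m$ holomorphically into $N_+$), and defines $\mathcal{T}^H$ as the universal cover of $\mathcal{Z}^H_m$. With this definition $\mathcal{T}$ is no longer tautologically a subset of $\mathcal{T}^H$: one gets only a lift $i_{\mathcal{T}}:\mathcal{T}\to\mathcal{T}^H$ of the inclusion of moduli spaces, and its injectivity is a separate lemma (Lemma 5.4 of the companion paper), without which "restricting back to $\mathcal{T}\hookrightarrow\mathcal{T}^H$" does not conclude the argument. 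You flag the first of these difficulties but propose no mechanism for it; the codimension-one/Riemann-extension device and the injectivity of $i_{\mathcal{T}}$ are the missing ingredients.
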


The main idea of the proof of the above theorem is the construction of affine structures on the Teichm\"{u}ller space $\mathcal{T}$  and the Hodge metric completion space $\mathcal{T}^H$ of the Torelli space $\mathcal{T}'$.

This paper is organized as follows. In Section \ref{Section period
map} we review the definition of the classifying space of polarized Hodge structures
and briefly describe the construction of the Torelli space
of polarized and marked Calabi-Yau manifolds and its basic
properties. In Section \ref{Section local property} we review the geometry of the
local deformation theory of complex structures, especially in the
case of Calabi-Yau manifolds.

In Section \ref{AffineonT} we construct a holomorphic affine structure on the
Teichm\"{u}ller space. In Section \ref{THm}, we introduce
the Hodge metric completion space of the
Torelli space that we constructed in \cite{LS}. We show that
the Hodge metric completion space $\mathcal{T}^H_{m}$ are holomorphic affine
manifolds. In Section \ref{Proof}, we show that $\mathcal{T}^H_{m}$ is independent on the level $m$ structure, and for simplicity we denote $\mathcal{T}^H_{m}$ by $\mathcal{T}^H$, which contains the Torelli space as an open dense
submanifold. One may refer to \cite{LS} for
more details about $\mathcal{T}^H$. Based on the
completeness and holomorphic affineness of $\mathcal{T}^H$ and the local Torelli theorem, we prove the injectivity of 
the period map $\Phi':\, \mathcal{T}'\rightarrow D$.
In Section \ref{globalSection}, we extend the local canonical sections of the holomorphic $(n,0)$-classes, constructed in Section \ref{Section local property}, to the global canonical sections on the Hodge completion space $\T^{H}$.

In Appendix \ref{preliminary} we review some facts about the structures of the 
period domain $D$ which are known to experts but 
important to our discussions. We mainly follow Section 3 in \cite{schmid1}
to discuss the classifying space $D$,  its compact dual
$\check{D}$, the tangent space of $D$, and the tangent space of
$\check{D}$ as quotients of Lie groups and Lie algebras respectively. Moreover, we describe the relation between the period domain
and the unipotent Lie group $N_+=\exp(\mathfrak{n}_+)$, which is the
corresponding Lie group of the nilpotent Lie algebra. We also
describe the matrix representation of elements in those Lie groups
and Lie algebras if we fix a base point in $D$ and an adapted basis
for the Hodge decomposition of the base point.

Now we briefly recall the history of the Torelli problem. The idea
to study the periods of abelian varieties on Riemann surfaces goes
back to Riemann. In 1914 Torelli asked wether two curves are
isomorphic if they have the same periods. See \cite{Tor} for detail.
In \cite{aw1} Weil reformulated the Torelli problem as follows:
Suppose for two Riemann surfaces, there exists an isomorphism of
their Jacobians which preserves the canonical polarization of the
Jacobians, is it true that the two Riemann surfaces are isomorphic.
Andreotti proved Weil's version of the Torelli problem in
\cite{and}.

Another important achievement about the Torelli problem, conjectured
by Weil in \cite{W2}, was the proof of the global Torelli Theorem
for K3 surfaces, essentially due to Shafarevich and
Piatetski-Shapiro in \cite{PS}. Andreotti's proof is based on
specific geometric properties of Riemann surfaces. The approach of
Shafarevich and Piatetski-Shapiro is based on the arithmeticity of
the mapping class group of a K3 surface. It implies that the special
K3 surfaces,
 the Kummer surfaces, are everywhere dense subset in the moduli of K3 surfaces. Shafarevich and
Piatetski-Shapiro observed that the period map has degree one on the
set of Kummer surfaces, which implies the global Torelli theorem.

The literature about the Torelli problem is enormous. Many authors made
very substantial contributions to the general Torelli problem. We
believe that it is impossible to give a complete list of all the
achievements in this area and its applications. In \cite{ver}
Verbitsky used an approach similar to ours in his proof of the global
Torelli theorem for hyperk\"{a}hler manifolds.

This paper is a revised and shortened version of our joint paper with Andrey Todorov who passed away 
in March 2012. By using methods we recently developed, we correct an error in an earlier version of this paper in proving the existence of affine structure 
on the Teichm\"uller space, and introduce the Torelli space and its Hodge metric completion which are the most natural spaces for studying period maps and the global Torelli problem.  
To avoid overlaps with the other papers we have written recently on this topic, in this paper we only sketch the main ideas of our proofs, and most details are given in \cite{LS}. We sincerely 
apologize for the confusion caused by our careless mistake and the unexpected death of  Professor Andrey Todorov.

We would like to thank Professor S.-T. Yau for his constant interest, for
sharing his ideas and encouragement during the work on this project.
We had numerous useful conversations with X. Sun. We would also like to thank Professors
F. Bogomolov, P. Deligne, Bart van den Dries, R. Friedman, V.
Golyshev, Radu Laza, S. Li, B. Lian, E. Loojienga,  Yu. Manin, Veeravalli S. Varadarajan and M. Verbitsky for
their interest and valuable suggestions.

\section{The Period Map}\label{Section period map}
In this section, we review the definitions and some basic results
about period domain, Teichm\"{u}ller space and Torelli space, as well as the period maps on them. In
Section \ref{section period domain}, we recall the definition and
some basic properties of period domain. In Section \ref{section
construction of Tei}, we discuss the construction of the
Teichm\"ulller space and Torelli space of Calabi-Yau manifolds based on the works of
Popp \cite{Popp}, Viehweg \cite{v1} and Szendroi \cite{sz} on the
moduli spaces of Calabi-Yau manifolds. In Section \ref{section
period map}, we define the period maps from the Teichm\"{u}ller space and Torelli space
to the period domain. We remark that most of the results in this
section are standard and can be found from the literature in the
subjects.

\subsection{Classifying space of polarized Hodge structure}\label{section period
domain}
We first review the construction of the classifying space of
polarized Hodge structures, which is also called the period domain.
We refer the reader to Section 3 of \cite{schmid1} for details.

This paper mainly considers polarized and marked Calabi-Yau manifolds.
A pair $(M,L)$ consisting of a Calabi-Yau manifold $M$ of complex dimension $n$ and an ample
line bundle $L$ over $M$ is called a {polarized Calabi-Yau
manifold}. By abuse of notation the Chern class of $L$ will also be
denoted by $L$ and $L\in H^2(M,\mathbb{Z})$. We use
$h^n=\dim_{\mathbb{C}}H^n(M,\mathbb{C})$ to denote the Betti number.

We fix a lattice $\Lambda$ with a pairing $Q_{0}$, where $\Lambda$ is isomorphic to $H^n(M_{0},\mathbb{Z})/\text{Tor}$ for some Calabi--Yau manifold $M_{0}$ and $Q_{0}$ is defined by the cup-product.
For a polarized Calabi--Yau manifold $(M,L)$, we define a marking $\gamma$ as an isometry of the lattices
\begin{equation}\label{marking}
\gamma :\, (\Lambda, Q_{0})\to (H^n(M,\mathbb{Z})/\text{Tor},Q).
\end{equation}

\begin{definition}
Let the pair $(M,L)$ be a polarized Calabi--Yau manifold, we call the
triple $(M,L,\gamma)$ a polarized and
marked Calabi--Yau manifold.
\end{definition}

For a polarized and marked Calabi-Yau manifold $(M,L,\gamma)$ with background
smooth manifold $X$, the marking $\gamma$ identifies
$H_n(M,\mathbb{Z})/Tor$ isometrically to the fixed lattice $\Lambda$. This
gives us a canonical identification of the middle dimensional de
Rahm cohomology of $M$ to that of the background manifold $X$,
\begin{equation*}
H^n(M)\cong H^n(X)
\end{equation*}
where the coefficient ring can be ${\mathbb{Q}}$, ${\mathbb{R}}$ or
${\mathbb{C}} $.

Since the polarization $L$ is an integer class, it defines a map
\begin{equation*}
L:\, H^n(X,{\mathbb{Q}})\to H^{n+2}(X,{\mathbb{Q}})
\end{equation*}
given by $A\mapsto L\wedge A$ for any $A\in H^n(X,{\mathbb{Q}})$. We
denote by $H_{pr}^n(X)=\ker(L)$ the primitive cohomology groups
where,
again, the coefficient ring is ${\mathbb{Q}}$, ${\mathbb{R}}$ or ${\mathbb{C}%
}$. We let $H_{pr}^{k,n-k}(M)=H^{k,n-k}(M)%
\cap H_{pr}^n(M,{\mathbb{C}})$ and denote its dimension by
$h^{k,n-k}$. We have the Hodge decomposition
\begin{align}  \label{cl10}
H_{pr}^n(M,{\mathbb{C}})=H_{pr}^{n,0}(M)\oplus\cdots\oplus
H_{pr}^{0,n}(M).
\end{align}
It is easy to see that for a polarized Calabi-Yau manifold, since $H^2(M, {\mathcal O}_M)=0$, we have
$$H_{pr}^{n,0}(M)= H^{n,0}(M), \ H_{pr}^{n-1,1}(M)= H^{n-1,1}(M).$$

The Poincar\'e bilinear form $Q$ on $H_{pr}^n(X,{\mathbb{Q}})$ is
defined by
\begin{equation*}
Q(u,v)=(-1)^{\frac{n(n-1)}{2}}\int_X u\wedge v
\end{equation*}
for any $d$-closed $n$-forms $u,v$ on $X$. The bilinear form $Q$ is
symmetric if $n$ is even and is skew-symmetric if $n$ is odd.
Furthermore, $Q $ is non-degenerate and can be extended to
$H_{pr}^n(X,{\mathbb{C}})$ bilinearly, and satisfies the
Hodge-Riemann relations
\begin{eqnarray}  \label{cl30}
Q\left ( H_{pr}^{k,n-k}(M), H_{pr}^{l,n-l}(M)\right )=0\ \
\text{unless}\ \ k+l=n, \quad\text{and}\quad
\end{eqnarray}
\begin{eqnarray}  \label{cl40}
\left (\sqrt{-1}\right )^{2k-n}Q\left ( v,\bar v\right )>0\ \
\text{for}\ \ v\in H_{pr}^{k,n-k}(M)\setminus\{0\}.
\end{eqnarray}

The above Hodge decomposition of $H_{pr}^n(M,{\mathbb{C}})$ can also
be described via the Hodge filtration. Let $f^k=\sum_{i=k}^n
h^{i,n-i}$, and
\begin{equation*}
F^k=F^k(M)=H_{pr}^{n,0}(M)\oplus\cdots\oplus H_{pr}^{k,n-k}(M)
\end{equation*}
from which we have the decreasing filtration
\begin{equation*}
H_{pr}^n(M,{\mathbb{C}})=F^0\supset\cdots\supset F^n.
\end{equation*}

We know that
\begin{eqnarray}  \label{cl45}
\dim_{\mathbb{C}} F^k=f^k,
\end{eqnarray}
\begin{eqnarray}  \label{cl46}
H^n_{pr}(X,{\mathbb{C}})=F^{k}\oplus \bar{F^{n-k+1}},\quad\text{and}\quad
\end{eqnarray}
\begin{eqnarray}  \label{cl48}
H_{pr}^{k,n-k}(M)=F^k\cap\bar{F^{n-k}}.
\end{eqnarray}
In term of the Hodge filtration $F^n\subset\cdots\subset
F^0=H_{pr}^n(M,{\mathbb{C}})$, the Hodge-Riemann relations can be
written as
\begin{eqnarray}  \label{cl50}
Q\left ( F^k,F^{n-k+1}\right )=0, \quad\text{and}\quad
\end{eqnarray}
\begin{eqnarray}  \label{cl60}
Q\left ( Cv,\bar v\right )>0 \ \ \text{if}\ \ v\ne 0,
\end{eqnarray}
where $C$ is the Weil operator given by $Cv=\left (\sqrt{-1}\right
)^{2k-n}v$ when $v\in H_{pr}^{k,n-k}(M)$. The classifying space $D$
for polarized Hodge structures with data \eqref{cl45} is the space
of all such Hodge filtrations
\begin{equation*}
D=\left \{ F^n\subset\cdots\subset F^0=H_{pr}^n(X,{\mathbb{C}})\mid %
\eqref{cl45}, \eqref{cl50} \text{ and } \eqref{cl60} \text{ hold}
\right \}.
\end{equation*}
The compact dual $\check D$ of $D$ is
\begin{equation*}
\check D=\left \{ F^n\subset\cdots\subset F^0=H_{pr}^n(X,{\mathbb{C}})\mid %
\eqref{cl45} \text{ and } \eqref{cl50} \text{ hold} \right \}.
\end{equation*}

The classifying space $D\subset \check D$ is an open set. We note
that the
conditions \eqref{cl45}, \eqref{cl50} and \eqref{cl60} imply the identity %
\eqref{cl46}. From the definition of classifying space we naturally get
the {Hodge bundles} on $\check{D}$ by associating to each point
in $\check{D}$ the vector spaces $\{F^k\}_{k=0}^n$ in the Hodge
filtration of that point. Without confusion we will also denote by $F^k$
the bundle with $F^k$ as fiber, for each $0\leq k\leq n$.

We remark that one may refer to Appendix \ref{preliminary} for more detailed descriptions of the classifying space $D$, its compact dual $\check{D}$, and the tangent space at each point of them, by viewing them as quotients of Lie groups and Lie algebras.

\subsection{Constructions of moduli, Teichm\"uller, and Torelli spaces}\label{section construction of Tei}
In this subsection we briefly describe the construction of the Teichmuller space and Torelli space of Calabi-Yau manifolds and discuss their basic properties.

We need the concept of analytic family (versal or universal) of compact complex
manifolds, we refer to page~8-10 in \cite{su}, page~94 in
\cite{Popp} or page 19 in \cite{v1}, for equivalent definitions and more details. 

Let $(M,L)$ be a polarized Calabi--Yau manifold.
Recall that a marking of $(M,L)$ is defined as an isometry
$$\gamma :\, (\Lambda, Q_{0})\to (H^n(M,\mathbb{Z})/\text{Tor},Q).$$
For any integer $m\geq 3$, we follow the definition of Szendr\"oi \cite{sz}
 to define an $m$-equivalent relation of two markings on $(M,L)$ by 
$$\gamma\sim_{m} \gamma' \text{ if and only if } \gamma'\circ \gamma^{-1}-\text{Id}\in m \cdot\text{End}(H^n(M,\mathbb{Z})/\text{Tor}),$$
and denote $[\gamma]_{m}$ to be the set of all the equivalent classes of $\gamma$.
Then we call $[\gamma]_{m}$ a level $m$
structure on the polarized Calabi--Yau manifold.
For moduli space of
polarized Calabi-Yau manifold with level $m$ structure, we have the
following theorem, which is a reformulation of Theorem~2.2 in
\cite{sz}, we just take the statement we need in this paper. One can
also look at \cite{Popp} and \cite{v1} for more details about the
construction of moduli spaces of Calabi-Yau manifolds.

\begin{thm}\label{Szendroi theorem 2.2}
Let $m\geq 3$ and $M$ be polarized Calabi-Yau manifold with level
$m$ structure, then there exists a quasi-projective complex manifold
$\Z$ with a universal family of Calabi-Yau manifolds,
\begin{align}\label{szendroi versal family}
\mathcal{X}_{\Z}\rightarrow \Z,
\end{align}
containing $M$ as a fiber, and polarized by an ample line bundle
$\mathcal{L}_{\Z}$ on $\mathcal{X}_{\Z}$.
\end{thm}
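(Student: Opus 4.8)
The plan is to realize $Z_m$ as a geometric quotient of a locally closed subscheme of a Hilbert scheme by a projective linear group, following Popp \cite{Popp}, Viehweg \cite{v1} and Szendroi \cite{sz}; indeed the assertion is essentially Theorem~2.2 of \cite{sz}, so I only outline the main points. First I fix a power $L^{\otimes k}$ of the polarization that is very ample with vanishing higher cohomology on every polarized Calabi-Yau deformation of $M$: by Matsusaka's big theorem together with boundedness of the family, one value of $k$ works for all of them, and the Hilbert polynomial $P$ of $(M,L^{\otimes k})$ is then fixed. Each such polarized manifold embeds in $\mathbb{P}^N$, $N+1=P(k)$, as a point of $\mathrm{Hilb}_P(\mathbb{P}^N)$. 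Let $H\subset\mathrm{Hilb}_P(\mathbb{P}^N)$ be the locus of smooth subvarieties $V$ that are Calabi-Yau in the sense of this paper; $H$ is locally closed, and it is smooth because Calabi-Yau deformations are unobstructed (Bogomolov--Tian--Todorov). Restricting the universal family of $\mathrm{Hilb}_P$ gives $\mathcal{X}_H\to H$, whose Kodaira--Spencer map is surjective with kernel exactly the tangent to the orbit of $G:=\mathrm{PGL}(N+1,\mathbb{C})$; hence $\mathcal{X}_H\to H$ is versal at every point.

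Next I rigidify by level structure. Let $H_m\to H$ be the cover whose fiber over $[V]$ is the set of level-$m$ structures, i.e.\ bases of $(H_n(V,\mathbb{Z})/Tor)/m$; it is finite \'etale because the monodromy acts through the finite group $\mathrm{GL}\big((H_n/Tor)/m\big)$. The key point is that for $m\geq 3$ an automorphism of a polarized Calabi-Yau manifold acting trivially on $(H_n(M,\mathbb{Z})/Tor)/m$ is the identity (rigidity of Calabi-Yau automorphisms on integral middle cohomology, as in \cite{sz}); hence $G$ acts on $H_m$ with trivial stabilizers. Combining this with properness of the $G$-action — which follows from finiteness of automorphism groups of polarized varieties and boundedness — a geometric-invariant-theory argument shows every point of $H_m$ is stable and the geometric quotient
\[
Z_m \;=\; H_m / G
\]
exists as a quasi-projective scheme, smooth since $H_m$ is smooth and the action is free.

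Finally, since the $G$-action on $H_m$ is free and $\mathcal{X}_{H_m}\to H_m$ is $G$-equivariant, the family together with the relative $\mathcal{O}(1)$ descends to $\mathcal{X}_{Z_m}\to Z_m$ and to an ample line bundle $\mathcal{L}_{Z_m}$ polarizing it; versality at every point of $H_m$ passes to versality at every point of $Z_m$, because the orbit directions the Kodaira--Spencer map was killing are precisely the fibers of $H_m\to Z_m$. Embedding $(M,L^{\otimes k})$ into $\mathbb{P}^N$ and recording the given level-$m$ structure produces a point of $Z_m$ over which the fiber is $M$, which completes the construction.

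The main obstacle is the rigidity input used in the second step: proving that for $m\geq 3$ a level-$m$ structure eliminates all automorphisms, so that the $G$-action is genuinely free, and then upgrading the resulting quotient from an algebraic space to a quasi-projective \emph{manifold}. This is exactly the content one imports from \cite{sz} (ultimately from the moduli theory of \cite{Popp} and \cite{v1}); everything else is standard bookkeeping with the Hilbert scheme and Bogomolov--Tian--Todorov unobstructedness, and for our purposes we only record the portion of the statement needed later, namely the existence of $Z_m$, the versal family $\mathcal{X}_{Z_m}\to Z_m$ containing $M$ as a fiber, and the polarization $\mathcal{L}_{Z_m}$.
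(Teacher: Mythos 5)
The paper offers no proof of this statement: it is presented as a reformulation of Theorem~2.2 of \cite{sz}, with \cite{Popp} and \cite{v1} cited for the details of the moduli construction. Your sketch is a faithful outline of exactly that standard Hilbert-scheme/GIT construction, and you correctly isolate the two inputs that must be imported from those references (the rigidity of automorphisms under a level-$m$ structure for $m\geq 3$, and quasi-projectivity of the quotient), so your proposal is consistent with, and more detailed than, what the paper provides.
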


Let $(M,L)$ be a polarized Calabi-Yau manifold. We define $\T$
to be the universal cover of $\Z$ with the covering map
\begin{align*}
\pi_m:\, \T\rightarrow \Z
\end{align*}
and the family
\begin{align*}
\U\rightarrow \T
\end{align*}
to be the pull-back of the family (\ref{szendroi versal family}) by
the covering map $\pi_{m}$. 
\begin{proposition}\label{imp}
The \tei space $\mathcal{T}$ is a simply connected smooth complex
manifold, and the family
\begin{align}\label{versal family over Teich}
\U\rightarrow\mathcal{T}
\end{align}
containing $M$ as a fiber, is a universal family.
\end{proposition}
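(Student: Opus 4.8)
The plan is to transfer the two asserted properties --- simple connectivity and the local Kuranishi property --- from the already-established properties of $Z_m$ and its versal family \eqref{szendroi versal family} by means of the covering map $\pi_m\colon \mathcal{T}\to Z_m$. First I would recall from Theorem \ref{Szendroi theorem 2.2} that $Z_m$ is a quasi-projective complex manifold carrying the versal family $\mathcal{X}_{Z_m}\to Z_m$; in particular $Z_m$ is a connected smooth complex manifold. Since $\mathcal{T}$ is by definition the universal cover of $Z_m$, it is automatically a simply connected smooth complex manifold: the universal cover of a connected complex manifold inherits a unique complex structure making the covering map a local biholomorphism, and simple connectivity is built into the definition of the universal cover. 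This disposes of the first claim.

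For the second claim, I would argue that the versal family \eqref{szendroi versal family} over $Z_m$ is in fact local Kuranishi at every point of $Z_m$, and that this property is local and hence lifts through the covering $\pi_m$. The point is that versality in the sense used in Theorem \ref{Szendroi theorem 2.2} holds at \emph{every} point of $Z_m$ (the moduli-type construction of Szendr\H{o}i produces a family that is versal, not merely versal at one point), and by the remark recalled just before Theorem \ref{Szendroi theorem 2.2} --- that a family versal at each point of its base is automatically local Kuranishi at each point --- the family $\mathcal{X}_{Z_m}\to Z_m$ is local Kuranishi at each point of $Z_m$. Now both completeness and versality of a family at a point depend only on the germ of the family near that point, and $\pi_m$ is a local biholomorphism; so for any $p\in\mathcal{T}$ there is an open neighbourhood $V$ of $p$ mapped biholomorphically onto an open neighbourhood $\pi_m(V)$ of $\pi_m(p)$ in $Z_m$, over which $\mathcal{U}\to\mathcal{T}$ is isomorphic, as a family, to the restriction of $\mathcal{X}_{Z_m}\to Z_m$. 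Since $\mathcal{U}\to\mathcal{T}$ is defined as the pull-back of \eqref{szendroi versal family} along $\pi_m$, this isomorphism is immediate. Hence $\mathcal{U}\to\mathcal{T}$ is complete at each point of $\mathcal{T}$ and versal at each point, i.e.\ local Kuranishi at each point. The fact that $\mathcal{T}$ contains $M$ as a fiber follows because $M$ occurs as a fiber of $\mathcal{X}_{Z_m}\to Z_m$ and the fibers of $\mathcal{U}\to\mathcal{T}$ over the preimage of that point are isomorphic to $M$.

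The only genuinely delicate point is the assertion that the Szendr\H{o}i family is versal at \emph{every} point of $Z_m$, not merely versal at the distinguished point and complete elsewhere; I would pin this down by citing the relevant statement in \cite{sz} (and \cite{Popp}, \cite{v1}) that the construction yields a family with the local universal, equivalently local Kuranishi, property at each point --- this is exactly the content needed to make the dimension formula $\dim_{\mathbb C}\mathcal{T}=h^{n-1,1}(M)$ meaningful at all of $\mathcal{T}$ via the Kodaira--Spencer isomorphism. Everything else is the routine transfer of a local analytic property through a covering map.
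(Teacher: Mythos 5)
Your proposal is correct and follows essentially the same route as the paper: simple connectivity comes from the definition of the universal cover of the smooth complex manifold $Z_m$, and the local Kuranishi property is transferred from the pointwise versality of the Szendr\H{o}i family through the local biholomorphism $\pi_m$. The extra care you take in flagging that versality must hold at every point of $Z_m$ is a reasonable elaboration of what the paper simply asserts.
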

\begin{proof}
For the first part, because $\Z$ is smooth complex manifold, the
universal cover of $\Z$ is simply connected smooth complex
manifold. For the second part, we know that the family
(\ref{szendroi versal family}) is universal family at each point of
$\Z$, and $\pi$ is locally bi-holomorphic, thus the pull-back
family via $\pi$ is also universal, as the universal property of analytic families is  local.
\end{proof}
Note that the \tei space $\mathcal{T}$ does not depend on the choice
of $m$. In fact let $m_1$, $m_2$ be two different integers,
$\U_1\rightarrow\T_1$ and $\U_2\rightarrow\T_2$ are two versal
families constructed via level $m_1$ and level $m_2$ respectively as
above, both of which contain $M$ as a fiber. By using the fact that
$\T_1$ and $\T_2$ are simply connected and the definition of universal
family, we have a bi-holomorphic map $f:\T_1\rightarrow\T_2$, such
that the universal family $\U_1\rightarrow\T_1$ is the pull back of the
versal family $\U_2\rightarrow \T_2$ by $f$. Thus these two families
are isomorphic to each other.

Now we introduce the definition of  \textit{Torelli space} $\mathcal{T}'$. For this  we need to consider the  equivalence classes of triples $(M, L, \gamma)$ which is called a polarized and marked Calabi-Yau manifold with polarization $L$ and marking $\gamma$. To be more precise, 
 two triples $(M, L, \gamma)$ and $(M', L', \gamma'\})$ is called equivalent, if there exists a bi-holomorphic map $f:\,M\to M'$ with
\begin{align*}
f^*L'&=L,\quad \quad
f^*\gamma'=\gamma,
\end{align*} 
where $f^*\gamma'$ is given by $\gamma':\, (\Lambda, Q_{0})\to (H^n(M',\mathbb{Z})/\text{Tor},Q)$ composed with $$f^*:\, (H^n(M',\mathbb{Z})/\text{Tor},Q)\to (H^n(M,\mathbb{Z})/\text{Tor},Q),$$
then we define $[M, L, \gamma]=[M', L',\gamma']\in \mathcal{T}'$.  More precisely we define the Torelli space as follows.

\begin{definition}
The Torelli space $\T'$ of Calabi--Yau manifolds is the connected component of the moduli space of equivalence classes of polarized and marked Calabi--Yau manifolds, which contains $(M, L)$.
\end{definition}

There is a natural covering map $\pi_m':\,\mathcal{T}'\to\mathcal{Z}_m$ by mapping $(M, L, \gamma)$ to $(M, L, [\gamma]_{m})$. From this we see easily that $\T'$ is a smooth and connected complex manifold.
We can also get a pull-back universal family $\phi':\, \U'\to \T'$ on the Torelli space $\T'$ via the covering map $\pi_m'$.

Recall that the Teichm\"uller space $\T$ is defined to be the universal covering space of $\mathcal{Z}_{m}$ with covering map $\pi_{m}:\, \T\to \mathcal{Z}_{m}$. Then we can lift $\pi_{m}$ via the covering map $\pi_m':\,\mathcal{T}'\to\mathcal{Z}_m$ to get a covering map $\pi:\, \T\to \T'$, such that the following diagram commutes.
\begin{equation}\label{cover1}
\xymatrix{
\T \ar[dr]^-{\pi} \ar[dd]^-{\pi_m} &\\
&\T'\ar[dl]^-{\pi_m'}\\
\mathcal{Z}_{m} &\ .}
\end{equation}

We remark that our method of proving the global Torelli theorem for
polarized and marked Calabi-Yau manifold applies without change to
the case of more general projective manifolds with
trivial canonical line bundle, including polarized and marked
hyperk\"{a}hler manifolds and K3 surfaces. In these cases the
Teichm\"{u}ller spaces can be embedded inside symmetric domains of non-compact
type, so naturally have global holomorphic affine flat coodinates.


\subsection{The period map}\label{section period map}
We are now ready to define the period maps from the the Teichm\"uller space and Torelli space to the
period domain. 

Let us denote the period map on the moduli space by $\Phi_{\mathcal{Z}_m}: \,\mathcal{Z}_{m}\rightarrow D/\Gamma$, where 
$\Gamma$ denotes the global monodromy group which acts
properly and discontinuously on the period domain $D$.
Since the period map $\Phi_{\mathcal{Z}_m}$ is locally liftable, we have the lifted period map $\Phi:\, \T \to D$ on the universal cover $\T$ of $\Z$ such the following diagram commutes
$$\xymatrix{
\T \ar[r]^-{\Phi} \ar[d]^-{\pi_m} & D\ar[d]^-{\pi}\\
\mathcal{Z}_m \ar[r]^-{\Phi_{\mathcal{Z}_m}} & D/\Gamma
}$$

For any point $q\in\T'$, let $(M_q,L_{q},\gamma_{q})$ be the fiber of
family $\pi:\, \U'\rightarrow \T'$, which is a polarized and marked
Calabi-Yau manifold. Since on the Torelli space we have fixed lattice $(\Lambda_{0},Q_{0})$, we can use this to identify $(H^n(M_q, \
\mathbb{C}),Q)$ isometrically for the fiber of any point $q$ in $\T'$, and thus get a canonical trivial
bundle $H^n(M_p,\mathbb{C})\times \T'$ with $p\in T'$ the fixed point. We have similar
identifications for $H^n(M,\mathbb{Q})$ and $H^n(M,\mathbb{Z})$.

The period map from $\T'$ to $D$ is defined by
assigning each point $q\in\T'$ the Hodge structure on $M_p$,
\begin{align*}
\Phi':\, \T'\rightarrow D
\end{align*}
with $\Phi'(q)=\gamma_{q}^{-1}(\{F^n(M_q)\subset\cdots\subset F^0(M_q)\})$ for any $q\in \T'$, where $(M_{q},L_{q},\gamma_{q})$ is the fiber over $q$. We denote
$F^k(M_q)$ by $F^k_q$ for convenience.

In a summary we have defined the period maps on the Teichm\"uller space and Torelli space. Moreover they fit into the following commutative diagram
\begin{align}\label{periods}
\xymatrix{
\T \ar[dr]^-{\pi}\ar[rr]^-{\Phi} \ar[dd]^-{\pi_m} && D\ar[dd]^-{\pi_{D}}\\
&\T'\ar[ur]^-{\Phi'}\ar[dl]_-{\pi'_{m}}&\\
\Z \ar[rr]^-{\Phi_{\Z}} && D/\Gamma,
}
\end{align}
where the  maps $\pi_{m}$, $\pi'_{m}$ and $\pi$ are all natural covering maps between the corresponding spaces as in \eqref{cover1}.

Period map has several good properties, we refer the reader to
Chapter~10 in \cite{Voisin} for details. Among them the most
important is the following Griffiths transversality. Let $\Phi:\, \T \to D$ be
the period map on the Teichm\"uller space, then $\Phi$ is holomorphic map and for any $q\in\T$
and $v\in T_q^{1,0}\T$, the tangent map satisfies,
\begin{equation}\label{griffiths transversality quotient version}
\Phi_*(v)\in \bigoplus_{k=1}^{n}
\text{Hom}\left(F^k_q/F^{k+1}_q,F^{k-1}_q/F^{k}_q\right),
\end{equation}
where $F^{n+1}=0$, or equivalently
\begin{equation*}
\Phi_*(v)\in \bigoplus_{k=0}^{n} \text{Hom} (F^k_q,F^{k-1}_q/F^k_q).
\end{equation*}
This is called the Griffiths transversality.

In \cite{GS}, Griffiths and Schmid studied the  {Hodge metric} on the period domain $D$. We denote it by $h$. In particular, this Hodge metric is a complete homogeneous metric. 
By local Torelli theorem for Calabi--Yau manifolds, we know that the period maps $\Phi_{\mathcal{Z}_m}$, $\Phi$ and $\Phi'$ are locally injective. Thus it follows from \cite{GS} that the pull-back of $h$ by $\Phi_{\mathcal{Z}_m}$, $\Phi$ and $\Phi'$ on $\mathcal{Z}_m$, $\mathcal{T}$ and $\T'$ respectively are both well-defined K\"ahler metrics. By abuse of notation, we still call these pull-back metrics the \textit{Hodge metrics}, and they are all denoted by $h$. 

Hodge bundles over $\mathcal{T}$ are the pull-back of the Hodge
bundles over $D$ through the period map. For convenience, we still
denote them by $F^k$, for each $0\leq k\leq n$. We will also denote
by $P_p^{k}$ the projection from $H^n(M,\mathbb{C})$ to $F^k_p$ with
respect to the Hodge filtration at $M_p$, and $P_p^{n-k,k}$ the
projection from $H^n(M,\mathbb{C})$ to $H^{n-k,k}(M_p)$ with respect
to the Hodge decomposition at $M_p$.

With all of these preparations, we are ready to state precisely
the main theorem of this paper.
\begin{thm}\label{thm}
The period map $\Phi'$ constructed above is an embedding,
\begin{align*}
\Phi': \, \T' \hookrightarrow D.
\end{align*}
\end{thm}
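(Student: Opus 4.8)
The plan is to prove that $\Phi$ is an embedding by showing it is injective (the map being an immersion follows from the local Torelli theorem already cited, and being a closed embedding onto its image will follow once injectivity is established together with the affine-coordinate description). The strategy has three main ingredients, each developed in a later section of the paper: (i) the Teichm\"uller space $\mathcal{T}$ carries a natural holomorphic affine structure, built from the canonical local family of holomorphic $(n,0)$-forms on the Kuranishi family following \cite{tod1}; (ii) the Hodge-metric completion $\mathcal{T}^H$ (constructed in \cite{CGL}) is a \emph{complete} holomorphic affine manifold containing $\mathcal{T}$ as an open dense submanifold, and the period map extends holomorphically to $\Phi^H:\mathcal{T}^H\to N_+\subset\check D$; (iii) composing $\Phi^H$ with a suitable linear projection $\pr:N_+\to \mathbb{C}^N$ (projection onto the $\Hom(F^n/F^{n+1},F^{n-1}/F^n)$-block, i.e.\ essentially the leading graded piece of the period matrix) yields a map $\Psi^H=\pr\circ\Phi^H:\mathcal{T}^H\to\mathbb{C}^N$.

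The heart of the argument is to show that $\Psi^H$ is a holomorphic affine embedding. First I would check that $\Psi^H$ is, in the Kuranishi affine coordinates on $\mathcal{T}$, literally an affine-linear map with invertible linear part: the canonical family of $(n,0)$-forms gives flat coordinates $(\tau_1,\dots,\tau_N)$ on a Kuranishi chart such that, to first order, $\Phi$ is the identity in these coordinates, and Griffiths transversality plus the local Torelli isomorphism $T_p\mathcal{T}\xrightarrow{\sim}\Hom(F^n_p/F^{n+1}_p,F^{n-1}_p/F^n_p)$ forces the projected period matrix to depend affinely and isomorphically on $\tau$. Hence $d\Psi^H$ is everywhere an isomorphism on $\mathcal{T}$, so $\Psi^H$ is an immersion there; by continuity and density it is an affine immersion on all of $\mathcal{T}^H$. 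Next, a standard fact about affine manifolds (a local affine immersion between affine manifolds of the same dimension whose source is complete is a covering map onto a convex-like image, and in fact an affine isomorphism onto $\mathbb{C}^N$ or onto an affine subspace when the target is $\mathbb{C}^N$): completeness of $\mathcal{T}^H$ with respect to the Hodge metric, which is compatible with the affine structure, upgrades the immersion $\Psi^H$ to an injective map — indeed a biholomorphism of $\mathcal{T}^H$ onto an open subset of $\mathbb{C}^N$, equivalently global affine coordinates on $\mathcal{T}^H$.

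Once $\Psi^H$ is injective on $\mathcal{T}^H$, it is injective on $\mathcal{T}$; since $\Psi^H=\pr\circ\Phi^H$ factors through $\Phi^H$, and $\Phi^H|_{\mathcal{T}}=\Phi$, injectivity of $\Psi^H$ on $\mathcal{T}$ immediately gives injectivity of $\Phi:\mathcal{T}\to D$. Combined with the local Torelli theorem (so $\Phi$ is an immersion) and properness coming from the affine-coordinate picture — $\Phi$ is a homeomorphism onto its image because it becomes, after projection, the inclusion of $\mathcal{T}$ as a subset of the affine space $\mathbb{C}^N$ — we conclude that $\Phi$ is an embedding.

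The step I expect to be the main obstacle is establishing that the extended map $\Psi^H$ really is \emph{affine} (not merely holomorphic) on the completion $\mathcal{T}^H$, and that the compatibility of the affine structure on $\mathcal{T}^H$ with that on $\mathcal{T}$ is strong enough to run the completeness argument. Concretely: one must know that $\mathcal{T}^H$ is a holomorphic affine manifold whose affine atlas restricts to the Kuranishi affine atlas on $\mathcal{T}$, that the Hodge metric extends to a complete metric whose geodesics are affine lines, and that $\Phi^H$ is compatible with these structures so that $\pr\circ\Phi^H$ is affine. Verifying these facts requires the detailed construction of $\mathcal{T}^H$ from \cite{CGL} together with a careful analysis of the local affine charts near boundary points; this is where the real work of Sections \ref{Section flat structure}, \ref{THm} and \ref{MainProof} lies, and it is the crux on which the whole argument rests.
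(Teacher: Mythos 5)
Your overall architecture is the same as the paper's: a holomorphic affine structure on $\mathcal{T}$ from the Kuranishi charts, the Hodge metric completion $\mathcal{T}^H$ with a compatible affine structure and an extended period map into $N_+$, the projection $\Psi^H=P\circ\Phi^H$ onto the $\Hom(F^n_p,F^{n-1}_p/F^n_p)$-block, and the chain ``$\Psi^H$ affine $+$ locally injective $+$ completeness $\Rightarrow$ $\Psi^H$ injective $\Rightarrow$ $\Phi$ injective.'' However, the step you lean on to convert completeness into injectivity would fail as written. You assert that the Hodge metric on $\mathcal{T}^H$ is \emph{compatible} with the affine structure, i.e.\ that it is a complete metric whose geodesics are affine lines, and you then invoke the standard developing-map fact for geodesically complete affine manifolds. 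The paper explicitly warns (end of Section \ref{affineness}) that the flat connections defining the affine structures on $\mathcal{T}$, $\mathcal{T}_m$ and $\mathcal{T}^H_{_m}$ are in general \emph{not} compatible with the Hodge metric; Hodge-metric completeness therefore does not give affine (geodesic) completeness, and the ``standard fact'' is not available. What the paper proves instead is the weaker but sufficient Lemma \ref{straightline}: any two points of $\mathcal{T}^H_{_m}$ can be joined by a straight line of the affine structure (this is where Hodge-metric completeness actually enters, via the construction in \cite{CGL}). Given that, injectivity of $\Psi^H_{_m}$ is immediate: the affine map $\Psi^H_{_m}$ restricted to such a line is linear, hence constant if it identified the two endpoints, contradicting the local injectivity of Lemma \ref{injofPsi}. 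Supplying a proof of the existence of these affine segments without assuming metric--connection compatibility is the missing ingredient in your argument.

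Two smaller points. First, ``by continuity and density it is an affine immersion on all of $\mathcal{T}^H$'' is not automatic (nondegeneracy of the differential does not pass to limits by density alone); the paper gets it by first transporting the affine structure to $\mathcal{T}_m$ and then to $\mathcal{T}^H_{_m}$ via $\Psi^H_{_m}$ (Lemma \ref{affine on Tm}, Theorem \ref{THmaffine}), so that nondegeneracy at one point propagates by affineness. Second, deducing injectivity of $\Phi$ on $\mathcal{T}$ from injectivity of $\Phi^H$ on $\mathcal{T}^H$ requires knowing that the lifting $i_{\mathcal{T}}:\mathcal{T}\to\mathcal{T}^H$ is itself injective; this is not tautological and is the content of Lemma \ref{injectivity of i}.
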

We will outline the main ideas of proofs of the above theorem, and refer the reader to \cite{LS} for details of the arguments and more results.  
\section{Local Geometric Structure of the Teichm\"{u}ller Space}\label{Section local property}
In this section, we review the local deformation theory of polarized
Calabi-Yau manifolds, which will be needed for the construction of
the global holomorphic affine structure and global holomorhic sections of the Hodge bundle $F^n$ on the Teichm\"{u}ller space
in Section \ref{globalSection}. This section can be skipped if the reader is only interested in the proof of the Theorem \ref{thm}.  

In Section \ref{section
local deformation of complex structure}, we briefly review the basic
local deformation theory of complex structures. In Section
\ref{section local  deformation of cy mfds}, we recall the local Kuranishi
deformation theory of Calabi-Yau manifolds, which depends on the Calabi-Yau metric in a substantial way. In Section
\ref{section local canonical section}, we describe a local family of the canonical
holomorphic $(n,0)$-forms as a section of the Hodge bundle $F^n$ over
the local deformation space of Calabi-Yau manifolds, from which we obtain an expansion
of the family of holomorphic $(n,0)$-classes as given in Theorem \ref{expcoh}.

Most of the results in this section are standard now in the
literatures, and can be found in \cite{km1}, \cite{tod1}, and
\cite{tian1}. For reader's convenience, we also briefly review some
arguments. We remark that one may use a more algebraic approach to
Theorem \ref{expcoh} by using the local Torelli theorem and the
Griffiths transversality.

\subsection{Local deformation of complex structure}\label{section local deformation of complex structure}
Let $X$ be a smooth manifold of dimension $\dim_{\mathbb{R}} X=2n$
and let $J$ be an integrable complex structure on $X$. We denote by
$M=(X,J)$ the corresponding complex manifold, and $\partial$,
$\bar{\partial}$ the corresponding differential operators on $M$.

Let $\phi\in A^{0,1}\left (M,T^{1,0}M\right )$ be a $T^{1,0}M$-
valued smooth $(0,1)$-form. For any point $x\in M$, and any local
holomorphic coordinate chart $(U, z_1,\cdots,z_n)$ around $x$. Let
us express
$\phi=\phi^i_{\bar{j}}d\bar{z}_j\otimes\frac{\partial}{\partial
z_i}=\phi^i\partial_i$, where $\phi^i=\phi^i_{\bar{j}}d\bar{z}_j$
and $\partial_i=\frac{\partial}{\partial z_i}$ for simplicity. Here we use the standard convention to sum over the
repeated indices. We
can view $\phi$ as a map
\begin{align*}
\phi:\, \Omega^{1,0}(M)\to \Omega^{0,1}(M)
\end{align*}
such that locally we have
\begin{align*}
\phi(dz_i)=\phi^i\ \ \ \ \ \text{for}\ \  1\leq i\leq n.
\end{align*}

We use $\phi$ to describe deformation of complex structures. Let
\begin{align*}
\Omega_\phi^{1,0}(x)=\text{span}_\C\{ dz_1+\phi(dz_1), \cdots,
dz_n+\phi(dz_n)\},\quad\text{and}\quad
\end{align*}
\begin{align*}
\Omega_\phi^{0,1}(x)=\text{span}_\C\{ d\bar z_1+\bar\phi(d\bar z_1),
\cdots, d\bar z_n+\bar\phi(d\bar z_n)\},
\end{align*}
if $\Omega_\phi^{1,0}(x)\cap\Omega_\phi^{0,1}(x)=0$ for any $x$,
then we can define a new almost complex structure $J_\phi$ by letting
$\Omega_\phi^{1,0}(x)$ and $\Omega_\phi^{0,1}(x)$ be the eigenspaces
of $J_\phi(x)$ with respect to the eigenvalues $\sqrt{-1}$ and
$-\sqrt{-1}$ respectively, and we call such $\phi$ a
Beltrami differential.

It was proved in \cite{nn} that the almost complex structure
$J_\phi$ is integrable if and only if
\begin{eqnarray}\label{int}
\bar\pa\phi=\frac{1}{2}[\phi,\phi].
\end{eqnarray}

If (\ref{int}) holds, we will call $\phi$ an integrable Beltrami
differential and denote by $M_\phi$ the corresponding complex
manifold. Please see Chapter 4 in \cite{km1} for more details about the
deformation of complex structures.

Let us recall the notation for contractions and Lie bracket of Beltrami
differentials. Let $(U,z_1, \cdots,z_n)$ be the local coordinate
chart defined above, and $\Omega=fdz_1\wedge\cdots\wedge dz_n$ be a
smooth $(n,0)$-form on $M$, and $\phi\in A^{0,1}(M,T^{1,0}M)$ be a
Beltrami differential. We define
\begin{align*}
\phi\lrcorner\Omega=\sum\limits_{i}(-1)^{i-1}f\phi^i\wedge
dz_1\wedge\cdots\wedge\widehat{dz_i}\wedge\cdots\wedge dz_n.
\end{align*}
For Beltrami differentials $\phi$, $\psi\in A^{0,1}(M,T^{1,0}M)$,
with $\phi=\phi^i\partial_i$ and $\psi=\psi^k\partial_k$, recall that the Lie bracket is defined as
\begin{align*}
[\phi,\psi]=\sum\limits_{i,k}\left(\phi^i\wedge\partial_i\psi^k+\psi^i\wedge\partial_i\phi^k\right)\otimes\partial_k,
\end{align*}
where $\partial_i\phi^k=\frac{\partial\phi^k_{\bar{l}}}{\partial
z_i}d\bar{z}_l$ and
$\partial_i\psi^k=\frac{\partial\psi^k_{\bar{l}}}{\partial
z_i}d\bar{z}_l$.

For $k$ Beltrami differentials $\phi_1,\cdots,\phi_k\in
A^{0,1}(M,T^{1,0}M)$, with $\phi_\alpha=\phi^i_\alpha\partial_i$ and $1\leq\alpha\leq k$, we define
\begin{align*}
\phi_1\wedge\cdots\wedge\phi_k=\sum_{i_1<\cdots<i_k}\left (
\sum_{\sigma\in
S_k}\phi_{\sigma(1)}^{i_1}\wedge\cdots\wedge\phi_{\sigma(k)}^{i_k}
\right )\otimes \left (\partial_{i_1}\wedge\cdots\wedge
\partial_{i_k}\right),
\end{align*}
where $S_k$ is the symmetric group of $k$ elements. Especially we
have
\begin{align*}
\wedge^k\phi=k!\sum_{i_1<\cdots<i_k}\left (
\phi^{i_1}\wedge\cdots\wedge\phi^{i_k}\right )\otimes \left
(\partial_{i_1}\wedge\cdots\wedge \partial_{i_k}\right ).
\end{align*}

Then we define the contraction,
\begin{align*}
&(\phi_1\wedge\cdots\wedge\phi_k)\lrcorner\Omega=
\phi_1\lrcorner(\phi_2\lrcorner(\cdots\lrcorner(\phi_k\lrcorner\Omega))) \\
=& \sum_{I=(i_1,\cdots,i_k)\in A_k}(-1)^{|I|+\frac{(k-1)(k-2)}{2}-1}
f\left ( \sum_{\sigma\in
S_k}\phi_{\sigma(1)}^{i_1}\wedge\cdots\wedge\phi_{\sigma(k)}^{i_k}
\right )\wedge dz_{I^c},
\end{align*}
where $A_k$ is the index set
\begin{equation*}
A_k=\{ (i_1,\cdots,i_k)\mid 1\leq i_1<\cdots<i_k\leq n\}.
\end{equation*}
Here for each $I=(i_1,\cdots,i_k)\in A_k$, we let $|I|=i_1+\cdots+i_k$ and $%
dz_{I^c}=dz_{j_1}\wedge\cdots\wedge dz_{j_{n-k}}$ where
$j_1<\cdots<j_{n-k}$ and $j_\alpha\ne i_\beta$ for any
$\alpha,\beta$. With the above notations, for any Beltrami
differentials $\phi$, $\psi\in A^{0,1}(M,T^{1,0}M)$ one has the
following identity which was proved in \cite{tian1}, \cite{tod1},
\begin{align}\label{tian tod lemma}
\partial((\phi\wedge\psi)\lrcorner\Omega)=-[\phi,\psi]\lrcorner\Omega+\phi\lrcorner\partial(\psi\lrcorner\Omega)+\psi\lrcorner\partial(\phi\Omega).
\end{align}

The following notation will be needed
in the construction of the local canonical family of holomorphic $(n,0)$-classes.
\begin{align}\label{family of smooth n0 form}
e^{\phi}\lrcorner\Omega=\sum\limits_{k\geq
0}\frac{1}{k!}\wedge^k\phi\lrcorner\Omega.
\end{align}

By direct computation, we see that
$e^{\phi}\lrcorner\Omega=f\left(dz_1+\phi(dz_1)\right)\wedge\cdots\wedge\left(dz_n+\phi(dz_n)\right)$
is a smooth $(n,0)$-form on $M_{\phi}$.

\subsection{Local deformation of Calabi-Yau manifold}\label{section local deformation of cy mfds}
 For a point $p\in\mathcal{T}$, we denote by $(M_p, L)$ the corresponding polarized and marked Calabi-Yau manifold
as the fiber over $p$. Yau's solution of the Calabi conjecture
implies that there exists a unique Calabi-Yau metric $h_p$ on $M_p$, and the
imaginary part $\omega_p=$Im\,$h_p\in L$ is the corresponding
K\"{a}hler form. First by using the Calabi-Yau metric we have the following lemma,
\begin{lemma}\label{iso}
Let $\Omega_p$ be a nowhere vanishing holomorphic $(n,0)$-form on
$M_p$ such that
\begin{eqnarray}\label{normalization}
\left ( \frac{\sqrt{-1}}{2}\right
)^n(-1)^{\frac{n(n-1)}{2}}\Omega_p\wedge\bar{\Omega_p}=\omega_p^n.
\end{eqnarray}
Then the map $\iota:\, A^{0,1}\left ( M, T^{1,0}M\right )\to
A^{n-1,1}(M)$ given by $\iota(\phi)=\phi\lrcorner\Omega_p$ is an
isometry with respect to the natural Hermitian inner product on both
spaces induced by $\omega_p$. Furthermore, $\iota$ preserves the
Hodge decomposition.
\end{lemma}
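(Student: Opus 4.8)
The plan is to reduce the statement to fibrewise linear algebra. Fix $p\in\mathcal{T}$, write $M=M_p$, $\omega=\omega_p$, $\Omega=\Omega_p$, and recall that $\omega$ is the Ricci-flat \ka form from Yau's theorem. First I would record why a normalized $\Omega$ exists and has constant pointwise norm: for any nowhere vanishing holomorphic $(n,0)$-form $\Omega'$ the positive volume form $\left(\tfrac{\sqrt{-1}}{2}\right)^n(-1)^{\frac{n(n-1)}{2}}\Omega'\wedge\bar{\Omega'}$ equals $f\,\omega^n$ for a positive function $f$, and $\sqrt{-1}\,\partial\bar\partial\log f=-\operatorname{Ric}(\omega)=0$; since $M$ is compact, $f$ is constant, so after rescaling $\Omega'$ by a constant we obtain $\Omega$ satisfying \eqref{normalization}. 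Then \eqref{normalization} says exactly that $|\Omega|_\omega\equiv 1$; equivalently, near any point $x$, in a unitary coframe $\{\theta^1,\dots,\theta^n\}$ of $\Lambda^{1,0}_x$ for the metric of $\omega$ one has $\Omega|_x=\mu\,\theta^1\wedge\cdots\wedge\theta^n$ with $|\mu|=1$.

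Next I would prove the pointwise isometry in such a coframe. Writing $\phi|_x=\sum_{i,j}\phi^i_{\bar j}\,\bar\theta^j\otimes e_i$ in the dual frame $\{e_i\}$ of $T^{1,0}_x$, so that $|\phi|^2_x=\sum_{i,j}|\phi^i_{\bar j}|^2$, one computes
\begin{align*}
\phi\lrcorner\Omega|_x=\mu\sum_{i,j}(-1)^{i-1}\phi^i_{\bar j}\,\bar\theta^j\wedge\theta^1\wedge\cdots\wedge\widehat{\theta^i}\wedge\cdots\wedge\theta^n .
\end{align*}
Since $\{(-1)^{i-1}\theta^1\wedge\cdots\wedge\widehat{\theta^i}\wedge\cdots\wedge\theta^n\}_i$ is a unitary coframe of $\Lambda^{n-1,0}_x$ and $|\mu|=1$, the forms appearing on the right are, up to sign, a unitary coframe of $\Lambda^{n-1,1}_x$, whence $|\phi\lrcorner\Omega|^2_x=|\phi|^2_x$. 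Integrating against $\omega^n$ shows $\iota$ preserves the $L^2$ inner products; it is injective because $\Omega$ is nowhere zero, and surjective by a rank count on the fibres. Hence $\iota$ is an isometric isomorphism.

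Finally I would deduce the compatibility with the Hodge decomposition. Contraction $v\mapsto v\lrcorner\Omega$ is a \emph{holomorphic} bundle isomorphism $T^{1,0}M\to\Lambda^{n-1}(T^{1,0}M)^{\ast}=\Omega^{n-1}_M$, since $\Omega$ is a nowhere vanishing holomorphic section of the trivial canonical bundle; tensoring with $\Lambda^{0,\bullet}$ it gives an isomorphism of Dolbeault complexes $A^{0,\bullet}(M,T^{1,0}M)\to A^{n-1,\bullet}(M)$ intertwining $\bar\partial$, concretely $\bar\partial(\phi\lrcorner\Omega)=(\bar\partial\phi)\lrcorner\Omega$ because $\bar\partial\Omega=0$ (up to an immaterial overall sign). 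Combined with the isometry property, $\iota$ therefore commutes with the formal adjoint $\bar\partial^{\ast}$, hence with the $\bar\partial$-Laplacian, so it carries harmonic, $\bar\partial$-exact and $\bar\partial^{\ast}$-exact forms to forms of the same type; in particular it restricts to an isometry of harmonic spaces $\mathcal{H}^{0,1}(M,T^{1,0}M)\to\mathcal{H}^{n-1,1}(M)$, i.e. the Hodge isometry $H^1(M,T^{1,0}M)\cong H^{n-1,1}(M)$ respecting the decompositions into harmonic, exact and coexact parts.

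I expect the only genuine bookkeeping point — not really an obstacle — to be the first step: confirming that the numerical constants $\left(\tfrac{\sqrt{-1}}{2}\right)^n$ and $(-1)^{\frac{n(n-1)}{2}}$ in \eqref{normalization} are exactly the ones making $|\Omega|_\omega\equiv 1$ (rather than some constant multiple of $1$) under the conventions used for $\omega^n$ and for the Hermitian inner products on $A^{0,1}(M,T^{1,0}M)$ and $A^{n-1,1}(M)$. Once that matching is fixed, the isometry and the Hodge-decomposition statements are forced by the fibrewise computation together with the holomorphicity of $\Omega$.
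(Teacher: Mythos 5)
Your proposal is correct and follows essentially the same route as the paper: the paper's proof also reduces to a pointwise computation in coordinates adapted to $\Omega_p$ (where the normalization \eqref{normalization} forces $\det[g_{i\bar j}]=1$, i.e.\ $|\Omega_p|_{\omega_p}\equiv 1$), and the compatibility with the Hodge decomposition likewise comes from $\iota$ intertwining $\bar\partial$ together with the isometry property. Your unitary-coframe bookkeeping and the preliminary existence argument via Ricci-flatness are just more explicit versions of what the paper leaves as ``direct computations.''
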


Let us briefly recall the proof. We can pick local coordinates
$z_1,\cdots,z_n$ on $M$ such that $\Omega_p=dz_1\wedge\cdots\wedge
dz_n$ locally and $\omega_p=\frac{\sqrt{-1}}{2} g_{i\bar
j}dz_i\wedge d\bar z_j$, then the condition \eqref{normalization}
implies that $\det[g_{i\bar j}]=1$. The lemma follows from direct
computations.

Let $\partial_{M_p}$, $\bar{\partial}_{M_p}$,
$\bar{\partial}_{M_p}^*$, $\square_{M_p}$, $G_{M_p}$, and
$\mathbb{H}_{M_p}$ be the corresponding operators in the Hodge
theory on $M_p$, where $\bar{\partial}_{M_p}^*$ is the adjoint
operator of $\bar{\partial}_{M_p}$, $\square_{M_p}$ the Laplace
operator, and $G_{M_p}$ the corresponding Green operator. We let
$\mathbb{H}_{M_p}$denote the harmonic projection onto the kernel of
$\square_{M_p}$. We also denote by $\mathbb{H}^{p,q}(M_p,\, E)$ the
harmonic $(p,q)$-forms with value in a holomorphic vector bundle $E$
on $M_p$.

By using the Calabi-Yau metric we have a more precise description of
the local deformation of a polarized Calabi-Yau manifold. First from
Hodge theory, we have the following identification
\begin{align*}
T_p^{1,0}\T\cong \h^{0,1}\left ( M_p,T_{M_p}^{1,0}\right ).
\end{align*}

From Kuranishi theory we have the following local convergent power
series expansion of the Beltrami differentials, which is now well-known as the
Bogomolov-Tian-Todorov theorem.
\begin{thm}\label{flatcoord}
Let $\phi_1,\cdots,\phi_N \in \h^{0,1}\left (
M_p,T_{M_p}^{1,0}\right )$ be a basis. Then there is a unique power
series
\begin{eqnarray}\label{10}
\phi(\tau)=\sum_{i=1}^N \tau_i\phi_i +\sum_{|I|\geq 2}\tau^I\phi_I
\end{eqnarray}
which converges for $|\tau|<\eps$ small. Here $I=(i_1,\cdots,i_N)$ is a
multi-index, $\tau^I=\tau_1^{i_1}\cdots\tau_N^{i_N}$ and $\phi_I\in
A^{0,1}\left ( M_p,T_{M_p}^{1,0}\right )$. Furthermore, the family of
Beltrami differentials $\phi(\tau)$ satisfy the following
conditions:
\begin{align}\label{charflat}
\begin{split}
&\bar\pa_{M_p}\phi(\tau)=\frac{1}{2}[\phi(\tau),\phi(\tau)],\\
&\bar\pa_{M_p}^*\phi(\tau)=0,\\
&\phi_I\lrcorner\Omega_p =\pa_{M_p}\psi_I,
\end{split}
\end{align}
for each $|I|\geq 2$ where $\psi_I\in A^{n-2,1}(M_p)$ are smooth $(n-2,1)$-forms.
By shrinking $\eps$ we can pick each $\psi_I$ appropriately such
that $\sum_{|I|\geq 2}\tau^I\psi_I$ converges for $|\tau|<\eps$.
\end{thm}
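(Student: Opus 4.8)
The plan is to build the series \eqref{10} order by order as the unique formal solution of the Maurer--Cartan equation \eqref{int} in the Kuranishi gauge $\bar\pa_{M_p}^*\phi=0$, with prescribed harmonic part $\sum_i\tau_i\phi_i$, while propagating along the induction the auxiliary property that each $\phi_I\lrcorner\Omega_p$ is $\pa_{M_p}$-exact. Writing $\phi(\tau)=\sum_{k\ge1}\phi^{(k)}$ with $\phi^{(k)}=\sum_{|I|=k}\tau^I\phi_I$ the homogeneous degree-$k$ part, I would set $\phi^{(1)}=\sum_i\tau_i\phi_i$ and, for $k\ge2$, impose
\[
\phi^{(k)}=\tfrac12\,\bar\pa_{M_p}^*G_{M_p}\,\eta^{(k)},\qquad \eta^{(k)}:=\tfrac12\sum_{i+j=k}[\phi^{(i)},\phi^{(j)}].
\]
This is the degree-$k$ truncation of the fixed-point equation $\phi=\sum_i\tau_i\phi_i+\tfrac12\bar\pa_{M_p}^*G_{M_p}[\phi,\phi]$, which follows from \eqref{int} by applying the Hodge decomposition $\mathrm{Id}=\mathbb{H}_{M_p}+\square_{M_p}G_{M_p}$ together with the gauge $\bar\pa_{M_p}^*\phi=0$.

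The heart of the argument is the inductive step, under the hypothesis that for all $j<k$ the form $\phi^{(j)}$ is smooth, satisfies $\bar\pa_{M_p}^*\phi^{(j)}=0$, and has $\phi^{(j)}\lrcorner\Omega_p$ which is $\pa_{M_p}$-closed. Two things must be checked. First, $\eta^{(k)}$ is $\bar\pa_{M_p}$-closed: this is the standard integrability/Jacobi computation, using that the lower-order Maurer--Cartan equations hold and that $\bar\pa_{M_p}$ is a graded derivation for the bracket. Second --- and this is the only place the Calabi--Yau condition enters --- the harmonic projection $\mathbb{H}_{M_p}\eta^{(k)}$ vanishes. To see this I would pass through the isometry $\iota(\cdot)=(\cdot)\lrcorner\Omega_p$ of Lemma \ref{iso}, which intertwines $\bar\pa_{M_p}$ (because $\Omega_p$ is holomorphic) and hence, being an isometry, also intertwines $\bar\pa_{M_p}^*$, $G_{M_p}$ and the harmonic projection. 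Since each $\phi^{(j)}\lrcorner\Omega_p$ is $\pa_{M_p}$-closed, the Tian--Todorov identity \eqref{tian tod lemma} gives
\[
\eta^{(k)}\lrcorner\Omega_p=-\tfrac12\,\pa_{M_p}\!\Big(\sum_{i+j=k}(\phi^{(i)}\wedge\phi^{(j)})\lrcorner\Omega_p\Big),
\]
so $\eta^{(k)}\lrcorner\Omega_p$ is $\pa_{M_p}$-exact; being also $\bar\pa_{M_p}$-closed by the first fact, the $\pa\bar\pa$-lemma on the \ka manifold $M_p$ makes it $\bar\pa_{M_p}$-exact, hence of zero harmonic part. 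As $\iota$ preserves the harmonic projection, $\mathbb{H}_{M_p}\eta^{(k)}=0$, so $\phi^{(k)}=\bar\pa_{M_p}^*G_{M_p}\eta^{(k)}$ solves $\bar\pa_{M_p}\phi^{(k)}=\eta^{(k)}$ and automatically satisfies $\bar\pa_{M_p}^*\phi^{(k)}=0$.

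To close the induction I must check that $\phi^{(k)}\lrcorner\Omega_p$ is again $\pa_{M_p}$-exact, which simultaneously produces the forms $\psi_I$ of \eqref{charflat}. Writing $\eta^{(k)}\lrcorner\Omega_p=\pa_{M_p}\bar\pa_{M_p}\gamma^{(k)}$ from the $\pa\bar\pa$-lemma and using $\phi^{(k)}\lrcorner\Omega_p=\bar\pa_{M_p}^*G_{M_p}(\eta^{(k)}\lrcorner\Omega_p)$ (valid since $\iota$ intertwines the operators), the \ka identities $\pa_{M_p}\bar\pa_{M_p}^*+\bar\pa_{M_p}^*\pa_{M_p}=0$ and $[G_{M_p},\pa_{M_p}]=0$ yield $\phi^{(k)}\lrcorner\Omega_p=-\pa_{M_p}(\bar\pa_{M_p}^*G_{M_p}\bar\pa_{M_p}\gamma^{(k)})$, which is $\pa_{M_p}$-exact; setting $\psi^{(k)}=-\bar\pa_{M_p}^*G_{M_p}\bar\pa_{M_p}\gamma^{(k)}$ and expanding in $\tau$ gives the $\psi_I$ with $\phi_I\lrcorner\Omega_p=\pa_{M_p}\psi_I$. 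This reinstates the induction hypothesis at order $k$.

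Finally, I would handle convergence and uniqueness by the standard Kuranishi machinery. Fixing a Hölder norm $C^{k,\a}$, the elliptic estimate $\|\bar\pa_{M_p}^*G_{M_p}u\|_{k+1,\a}\lesssim\|u\|_{k,\a}$ (a gain of one derivative) together with the tame estimate for the bracket lets one majorize the coefficients $\phi_I$ by those of the explicit convergent solution of a scalar quadratic equation, giving convergence of \eqref{10} for $|\tau|<\eps$; elliptic regularity upgrades this to $C^\infty$, and the same estimates applied to $\psi^{(k)}$ give convergence of $\sum_{|I|\ge2}\tau^I\psi_I$ after shrinking $\eps$. Uniqueness follows from the construction, since the gauge $\bar\pa_{M_p}^*\phi=0$ and the normalization $\mathbb{H}_{M_p}\phi=\sum_i\tau_i\phi_i$ force the recursion $\phi^{(k)}=\tfrac12\bar\pa_{M_p}^*G_{M_p}\eta^{(k)}$ and thereby determine every $\phi_I$. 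I expect the unobstructedness step $\mathbb{H}_{M_p}\eta^{(k)}=0$ to be the main obstacle, as it is precisely there that the triviality of the canonical bundle must be exploited through the interplay of the Tian--Todorov identity \eqref{tian tod lemma} and the $\pa\bar\pa$-lemma; the convergence estimates, though technical, are routine.
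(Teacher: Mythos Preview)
Your proposal is correct and follows essentially the same route as the paper's sketch: the Kuranishi recursion $\phi^{(k)}=\bar\pa_{M_p}^*G_{M_p}\eta^{(k)}$ in the gauge $\bar\pa_{M_p}^*\phi=0$, $\mathbb{H}_{M_p}\phi^{(k)}=0$, with unobstructedness and the $\pa_{M_p}$-exactness of $\phi_I\lrcorner\Omega_p$ obtained from the Tian--Todorov identity \eqref{tian tod lemma} together with the $\pa\bar\pa$-lemma, and convergence from the standard elliptic estimates. The only cosmetic differences are that the paper chooses the simpler primitive $\psi_I=\pa_{M_p}^*G_{M_p}(\phi_I\lrcorner\Omega_p)$ directly (legitimate once $\phi_I\lrcorner\Omega_p$ is known to be $\pa_{M_p}$-exact, $\bar\pa_{M_p}^*$-closed and harmonic-free), and that your displayed recursion carries a stray extra factor $\tfrac12$ which you silently correct a few lines later.
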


\begin{remark}\label{flatcoordremark}
The coordinate $\{\tau_1, \cdots, \tau_N\}$ depends on the choice of basis $\phi_1,\cdots,\phi_N \in \h^{0,1}\left (
M_p,T_{M_p}^{1,0}\right )$. But one can also determine the coordinate by  fixing a basis $\{\eta_0\}$ and $\{\eta_1, \cdots, \eta_N\}$ for $H^{n,0}(M_p)$ and $H^{n-1,1}(M_p)$ respectively. In fact, Lemma \ref{iso} implies that there is a unique choice of $\phi_1, \cdots, \phi_N$ such that $\eta_k=[\phi_k\lrcorner \eta_0],$ for each $1\leq k\leq N$.
\end{remark}

Theorem \ref{flatcoord} was proved in \cite{tod1}, and in \cite{tian1} in a
form without specifying the Kuranishi gauge, the second and the
third condition in (\ref{charflat}). This theorem implies that the local
deformation of a Calabi-Yau manifold is unobstructed. Here we only
mention two important points of its proof. For the convergence of
$\sum_{|I|\geq 2}\tau^I\psi_I$, noting that
$\partial_{M_p}\psi_I=\phi_I\lrcorner\Omega_p$ and
$\bar{\partial}\phi_I\lrcorner\Omega_p=0$, we can pick
$\psi_I=\partial^*_{M_p}G(\phi_I\lrcorner\Omega_p)$. It follows that
\begin{align*}
\|\psi_I\|_{k,\alpha}\leq
C(k,\alpha)\|\phi_I\lrcorner\Omega_p\|_{k-1,\alpha}\leq
C'(k,\alpha)\|\phi_I\|_{k-1,\alpha}.
\end{align*}
The desired convergence follows from the estimate on $\phi_I$. We
note that the convergence of (\ref{10}) follows from standard
elliptic estimate. See \cite{tod1}, or Chapter 4 of \cite{km1} for
details.

For the third condition in (\ref{charflat}), by using the first two
conditions in (\ref{charflat}), for example we have in the case of
$|I|=2$,
\begin{align}\label{firsttwocond}
\overline{\partial}_{M_p}\phi_{ij}=[\phi_i,\phi_j]\ \text{and}\
\overline{\partial}^*_{M_p}\phi_{ij}=0.
\end{align}
Then by using formula (\ref{tian tod lemma}) and Lemma \ref{iso},
we get that
\begin{align*}
[\phi_i,\phi_j]\lrcorner\Omega_p=\partial_{M_p}(\phi_i\wedge\phi_j\lrcorner\Omega_p)
\end{align*}
is $\partial_{M_p}$ exact. It follows that
$\overline{\partial}_{M_p}(\phi_{ij}\lrcorner\Omega_p)=\left(\overline{\partial}_{M_p}\phi_{ij}\right)\lrcorner\Omega_p$ is also
$\partial_{M_p}$ exact. Then by the $\partial\overline{\partial}$-lemma
we have
\begin{align*}
\overline{\partial}_{M_p}(\phi_{ij}\lrcorner\Omega_p)=\overline{\partial}_{M_p}\partial_{M_p}\psi_{ij}
\end{align*}
for some $\psi_{ij}\in A^{n-2,1}$. It follows that
\begin{align*}
\phi_{ij}\lrcorner\Omega_p=\partial_{M_p}\psi_{ij}+\bar{\partial}_{M_p}\alpha+\beta
\end{align*}
for some $\alpha\in A^{n-1,0}(M_p)$ and $\beta\in \mathbb{H}^{n-1,1}(M_p)$. By using the condition
$\overline{\partial}^*_{M_p}\phi_{ij}=0$ and Lemma \ref{iso}, we have
\begin{align*}
\phi_{ij}\lrcorner\Omega_p=\partial_{M_p}\psi_{ij}+\beta.
\end{align*}
 Because $\phi_{ij}$ is not uniquely
determined by condition (\ref{firsttwocond}), we can choose
$\phi_{ij}$ such that the harmonic projection
$\mathbb{H}(\phi_{ij})=0$. Then by using Lemma \ref{iso} again, we
have
\begin{align*}
\phi_{ij}\lrcorner\Omega_p=\partial_{M_p}\psi_{ij}.
\end{align*}
Thus there exists a unique $\phi_{ij}$ which satisfies all three
conditions in (\ref{charflat}). We can then proceed by induction and
the same argument as above to show that the third condition in
(\ref{charflat}) holds for all $|I|\geq 2$. See \cite{tod1} and
\cite{tian1} for details.

Theorem \ref{flatcoord} can be used to  define the local
holomorphic affine flat coordinates $\{\tau_1,\cdots,\tau_N\}$
around $p$, for a given orthonormal basis $\phi_1,\cdots,\phi_N \in
\h^{0,1}\left ( M_p,T_{M_p}^{1,0}\right )$. Sometimes we also denote by
$M_\tau$ the deformation given by the Beltrami differential
$\phi(\tau)$. This local affine coordinates can be extended to a global one 
as discussed in Section \ref{globalSection}.

\subsection{Local canonical section of holomorphic $(n,0)$-classes}\label{section local canonical section}
By using the local deformation theory, in \cite{tod1} Todorov
constructed a canonical local holomorphic section of the line bundle
$H^{n,0}=F^n$ over the local deformation space of a Calabi-Yau manifold
at the differential form level. We first recall the construction of the holomorphic
$(n,0)$-forms in \cite{tod1}.

Let $\phi\in A^{0,1}\left (
M,T^{1,0}M\right )$ be an integrable Beltrami differential and let
$M_\phi$ be the Calabi-Yau manifold defined by $\phi$. We refer the
reader to Section \ref{section local deformation of complex
structure} for the definition of the contraction
$e^\phi\lrcorner\Omega_p$.
\begin{lemma}\label{constructn0}
Let $\Omega_p$ be a nowhere vanishing holomorphic $(n,0)$-form on
$M_p$ and $\{z_1,\cdots,z_n\}$ is a local holomorphic coordinate system with
respect to $J$ such that
\begin{align*}
\Omega_p=dz_1\wedge\cdots\wedge dz_n
\end{align*}
locally. Then the smooth $(n,0)$-form
\begin{align*}
\Omega_\phi=e^{\phi}\lrcorner\Omega_p
\end{align*}
is holomorphic with respect to the complex structure on $M_\phi$ if
and only if $\partial_{M_p}(\phi\lrcorner\Omega_p)=0$.
\end{lemma}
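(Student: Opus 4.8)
The statement to prove is Lemma \ref{constructn0}, which characterizes when the smooth $(n,0)$-form $\Omega_\phi = e^\phi\lrcorner\Omega_p$ is holomorphic on $M_\phi$ in terms of the single condition $\partial_{M_p}(\phi\lrcorner\Omega_p)=0$. The plan is to compute $\bar\partial_{M_\phi}\Omega_\phi$ directly and show it vanishes precisely under this hypothesis. First I would observe, as noted just after \eqref{family of smooth n0 form}, that $\Omega_\phi = f(dz_1+\phi(dz_1))\wedge\cdots\wedge(dz_n+\phi(dz_n))$ is a well-defined $(n,0)$-form with respect to $J_\phi$, since each $dz_i + \phi(dz_i)$ lies in $\Omega_\phi^{1,0}$ by the very definition of the deformed complex structure. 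So the only issue is holomorphy, i.e. that $d\Omega_\phi$ has no component of type $(n-1,2)$ with respect to $J_\phi$ — but since $\Omega_\phi$ is top-degree in the holomorphic directions, $\partial_{M_\phi}\Omega_\phi=0$ automatically, and holomorphy is equivalent to $d\Omega_\phi = 0$, equivalently $\bar\partial_{M_\phi}\Omega_\phi = 0$.

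Next I would translate $d\Omega_\phi = 0$ back into operators on $M_p$. The key algebraic tool is the contraction identity and the expansion \eqref{family of smooth n0 form}: writing $\Omega_\phi = \sum_{k\ge 0}\frac{1}{k!}\wedge^k\phi\lrcorner\Omega_p$, one applies $d = \partial_{M_p} + \bar\partial_{M_p}$ term by term and uses the Tian-Todorov-type formula \eqref{tian tod lemma} together with the integrability equation $\bar\partial_{M_p}\phi = \frac12[\phi,\phi]$. A clean way to organize the bookkeeping is to note the formal identity $d(e^\phi\lrcorner\Omega_p) = e^\phi\lrcorner\big(\partial_{M_p}\Omega_p\big) + (\text{terms involving } \bar\partial_{M_p}\phi - \tfrac12[\phi,\phi])\lrcorner(\cdots) + \partial_{M_p}(\text{contractions})$; because $\Omega_p$ is holomorphic on $M_p$, $\partial_{M_p}\Omega_p = 0$, and because $\phi$ is integrable the bracket terms cancel, leaving an expression whose vanishing is governed by $\partial_{M_p}(\phi\lrcorner\Omega_p)$. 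Concretely, in low order: $d\Omega_\phi = d\Omega_p + d(\phi\lrcorner\Omega_p) + \tfrac12 d(\phi\wedge\phi\lrcorner\Omega_p)+\cdots$; the first term is zero, $d(\phi\lrcorner\Omega_p) = \partial_{M_p}(\phi\lrcorner\Omega_p) + \bar\partial_{M_p}(\phi\lrcorner\Omega_p)$, and using \eqref{tian tod lemma} the higher terms reorganize so that everything is expressed through iterated applications of $\partial_{M_p}$ to $\phi\lrcorner\Omega_p$ and $\Omega_p$.

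Then I would verify both implications. If $\partial_{M_p}(\phi\lrcorner\Omega_p) = 0$, then since $\bar\partial_{M_p}(\phi\lrcorner\Omega_p) = (\bar\partial_{M_p}\phi)\lrcorner\Omega_p = \tfrac12[\phi,\phi]\lrcorner\Omega_p = \tfrac12\partial_{M_p}(\phi\wedge\phi\lrcorner\Omega_p)$ by \eqref{tian tod lemma} (the last step using $\partial_{M_p}(\phi\lrcorner\Omega_p)=0$ to kill the remaining terms), one shows inductively that all terms telescope and $d\Omega_\phi = 0$. Conversely, if $\Omega_\phi$ is holomorphic, then $d\Omega_\phi = 0$; isolating the component of $d\Omega_\phi$ of the appropriate bidegree — namely the $(n,1)$-part with respect to $J$, which is exactly $\partial_{M_p}(\phi\lrcorner\Omega_p)$ plus terms of higher order in $\phi$ that contract against more copies of $\phi$ — and using that $\phi$ can be scaled (replace $\phi$ by $t\phi$ for small $t$ and differentiate at $t=0$, or simply match the lowest-order term in the expansion), one extracts $\partial_{M_p}(\phi\lrcorner\Omega_p) = 0$.

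The main obstacle I anticipate is the careful bidegree bookkeeping in the converse direction: $d\Omega_\phi$ mixes forms of several bidegrees with respect to the \emph{original} structure $J$, and one must argue that the vanishing of the full expression forces the vanishing of the distinguished piece $\partial_{M_p}(\phi\lrcorner\Omega_p)$ rather than merely some combination cancelling it against higher-order terms. The scaling trick $\phi \mapsto t\phi$ — or equivalently extracting the leading term of the power series $\Omega_{t\phi} = \Omega_p + t\,\phi\lrcorner\Omega_p + O(t^2)$, so that the $O(t)$ part of $d\Omega_{t\phi}$ is $\bar\partial_{M_p}(\phi\lrcorner\Omega_p) + \partial_{M_p}(\phi\lrcorner\Omega_p)$, whose $(n,1)$-component $\partial_{M_p}(\phi\lrcorner\Omega_p)$ must vanish — resolves this cleanly, and I would present the argument in that form.
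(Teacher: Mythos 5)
Your overall strategy matches the paper's: reduce holomorphy of $\Omega_\phi$ on $M_\phi$ to $d(e^\phi\lrcorner\Omega_p)=0$ and then rewrite $d(e^\phi\lrcorner\Omega_p)$ using the identity \eqref{tian tod lemma}, the integrability equation and the holomorphy of $\Omega_p$. The paper does this in one stroke via the closed-form identity
$d(e^{\phi}\lrcorner\Omega_p)=e^{\phi}\lrcorner\bigl(\bar{\partial}_{M_p}\Omega_p+\partial_{M_p}(\phi\lrcorner\Omega_p)\bigr)+\bigl(\bar{\partial}_{M_p}\phi-\tfrac{1}{2}[\phi,\phi]\bigr)\lrcorner(e^\phi\lrcorner\Omega_p)$,
which collapses to $d(e^{\phi}\lrcorner\Omega_p)=e^{\phi}\lrcorner\bigl(\partial_{M_p}(\phi\lrcorner\Omega_p)\bigr)$; both implications then follow simultaneously because $e^\phi\lrcorner(\cdot)$ (the substitution $dz_i\mapsto dz_i+\phi^i$) is pointwise injective whenever $\Omega^{1,0}_\phi\cap\Omega^{0,1}_\phi=0$. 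Your term-by-term bidegree bookkeeping is a workable, if messier, substitute for the forward direction.

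The genuine problem is your converse. The scaling device $\phi\mapsto t\phi$ does not work: for $t\neq 0,1$ the differential $t\phi$ is not integrable (since $\bar\partial_{M_p}(t\phi)=\tfrac{t}{2}[\phi,\phi]\neq\tfrac{t^2}{2}[\phi,\phi]=\tfrac12[t\phi,t\phi]$ unless $[\phi,\phi]=0$), and, more fatally, the hypothesis $d\Omega_\phi=0$ is given only for the single Beltrami differential $\phi$, i.e.\ only at $t=1$, so you have no family of identities to differentiate at $t=0$ or from which to extract an $O(t)$ coefficient. The obstacle you invented this trick to overcome is, however, not actually there: since $\wedge^k\phi\lrcorner\Omega_p$ has bidegree $(n-k,k)$ with respect to $J$, the $(n,1)$-component (with respect to $J$) of $d\Omega_\phi$ receives contributions only from $\bar\partial_{M_p}\Omega_p=0$ and from $\partial_{M_p}(\phi\lrcorner\Omega_p)$; no term with two or more copies of $\phi$ can land in bidegree $(n,1)$. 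So the $(n,1)$-part of $d\Omega_\phi$ equals $\partial_{M_p}(\phi\lrcorner\Omega_p)$ exactly, with no higher-order corrections to cancel against, and $d\Omega_\phi=0$ forces $\partial_{M_p}(\phi\lrcorner\Omega_p)=0$ directly. Replace the scaling argument by this one-line bidegree count (or by the injectivity of $e^\phi\lrcorner$ applied to the paper's exact identity) and the proof is complete.
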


\begin{proof} The proof in \cite{tod1} is by direct computations, here
we give a simple proof.

Being an $(n,0)$-form on $M_\phi$, $e^\phi\lrcorner\Omega_p$ is
holomorphic on $M_\phi$ if and only if
$d(e^\phi\lrcorner\Omega_p)=0$. For any smooth
$(n,0)$-form $\Omega_p$ and Beltrami differential $\phi\in
A^{0,1}\left(M,T^{1,0}M\right)$,
we have the following formula,
\begin{align*}
d(e^{\phi}\lrcorner\Omega_p)=e^{\phi}\lrcorner(\bar{\partial}_{M_p}\Omega_p+\partial_{M_p}(\phi\lrcorner\Omega_p))+(\bar{\partial}_{M_p}\phi-\frac{1}{2}[\phi,\phi])\lrcorner(e^\phi\lrcorner\Omega_p),
\end{align*}
which can be verified by direct computations. In our case the
Beltrami differential $\phi$ is integrable, i.e.
$\bar{\partial}_{M_p}\phi-\frac{1}{2}[\phi,\phi]=0$ and $\Omega_p$
is holomorphic on $M_p$. Therefore we have
\begin{align*}
d(e^{\phi}\lrcorner\Omega_p)=e^{\phi}\lrcorner(\partial_{M_p}(\phi\lrcorner\Omega_p)),
\end{align*}
which implies that $e^{\phi}\lrcorner\Omega_p$ is holomorphic on
$M_{\phi}$ if and only if $\partial_{M_p}(\phi\lrcorner\Omega_p)=0$.
\end{proof}

Now we can construct the canonical family of holomorphic
$(n,0)$-forms on the local deformation space of Calabi-Yau manifolds.

\begin{proposition}\label{canonicalfamily}
We fix on $M_p$ a nowhere vanishing holomorphic $(n,0)$-form
$\Omega_p$ and an orthonormal basis $\{\phi_i\}_{i=1}^N$ of
$\mathbb{H}^1(M_p,T^{1,0}M_p)$. Let $\phi(\tau)$ be the family of
Beltrami differentials given by \eqref{charflat} that defines a local deformation of $M_p$ which we denote by
$M_\tau$. Let
\begin{align}\label{can10}
\Omega^c_p(\tau)=e^{\phi(\tau)}\lrcorner\Omega_p.
\end{align}
Then $\Omega^c_p(\tau)$ is a well-defined nowhere vanishing
holomorphic $(n,0)$-form on $M_\tau$ and depends on $\tau$
holomorphically.
\end{proposition}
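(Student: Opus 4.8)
The plan is to verify three things about $\Omega^c_p(\tau)=e^{\phi(\tau)}\lrcorner\Omega_p$: that it is holomorphic on $M_\tau$, that it is nowhere vanishing, and that it depends holomorphically on $\tau$. The holomorphicity on $M_\tau$ follows immediately from Lemma \ref{constructn0}: since $\phi(\tau)$ is an integrable Beltrami differential by the first equation in \eqref{charflat}, we only need $\partial_{M_p}(\phi(\tau)\lrcorner\Omega_p)=0$. Expanding $\phi(\tau)=\sum_i\tau_i\phi_i+\sum_{|I|\geq 2}\tau^I\phi_I$, the linear terms contract to harmonic $(n-1,1)$-forms by Lemma \ref{iso} (harmonic forms are $\partial_{M_p}$-closed since on a Calabi-Yau manifold with the Calabi-Yau metric the relevant Laplacians agree up to a factor, so $\mathbb H^{n-1,1}(M_p)\subset\ker\partial_{M_p}$), while for $|I|\geq 2$ the third condition in \eqref{charflat} gives $\phi_I\lrcorner\Omega_p=\partial_{M_p}\psi_I$, hence $\partial_{M_p}(\phi_I\lrcorner\Omega_p)=0$. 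Summing over the convergent series gives $\partial_{M_p}(\phi(\tau)\lrcorner\Omega_p)=0$, so Lemma \ref{constructn0} applies.

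Next I would check that $\Omega^c_p(\tau)$ is nowhere vanishing. From the direct-computation remark following \eqref{family of smooth n0 form}, if $\Omega_p=f\,dz_1\wedge\cdots\wedge dz_n$ locally then
\begin{align*}
e^{\phi(\tau)}\lrcorner\Omega_p=f\left(dz_1+\phi(\tau)(dz_1)\right)\wedge\cdots\wedge\left(dz_n+\phi(\tau)(dz_n)\right),
\end{align*}
and the forms $dz_i+\phi(\tau)(dz_i)$ span $\Omega^{1,0}_{\phi(\tau)}$ pointwise, which is an $n$-dimensional space since $\phi(\tau)$ defines an almost complex structure; hence the wedge is nonzero wherever $f\neq 0$, i.e.\ everywhere, since $\Omega_p$ is nowhere vanishing.

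For the holomorphic dependence on $\tau$, the key point is that $\tau\mapsto\phi(\tau)$ is given by a power series in $\tau$ (not $\bar\tau$) with coefficients in a fixed Banach/Fréchet space of forms, convergent for $|\tau|<\eps$ by Theorem \ref{flatcoord}; the exponential contraction $e^{\phi(\tau)}\lrcorner\Omega_p=\sum_k\frac{1}{k!}\wedge^k\phi(\tau)\lrcorner\Omega_p$ is then obtained by substituting this power series into a fixed multilinear operation, so it is again a convergent power series in $\tau$ with values in $A^{n,0}(X)$ (viewed on the background smooth manifold $X$). One should be slightly careful that ``holomorphic in $\tau$'' is meant in the sense of a holomorphic section of the Hodge bundle $F^n$ over the Kuranishi chart — but since the cohomology class $[\Omega^c_p(\tau)]\in H^n(X,\C)$ is obtained from the $d$-closed form $\Omega^c_p(\tau)$ and the form varies holomorphically, the induced map to $H^n(X,\C)$ is holomorphic, landing in $F^n_\tau$ by construction.

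I expect the only real subtlety — the ``hard part'' — to be the rigorous justification that term-by-term manipulation of the infinite series $\sum_{|I|\geq 2}\tau^I\phi_I$ under the contraction and under $\partial_{M_p}$ is legitimate, i.e.\ that $\partial_{M_p}$ commutes with the convergent sum and that $\sum_{|I|\ge 2}\tau^I(\phi_I\lrcorner\Omega_p)$ converges in a norm strong enough to justify this. This is controlled by the estimates already recorded after Theorem \ref{flatcoord} (the bound $\|\psi_I\|_{k,\alpha}\leq C'(k,\alpha)\|\phi_I\|_{k-1,\alpha}$ together with the convergence of $\sum\tau^I\phi_I$ and $\sum\tau^I\psi_I$ in suitable Hölder norms), so after shrinking $\eps$ everything converges in $C^{k,\alpha}$ for all $k$, and the formal computations above are justified. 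The remaining steps are the routine direct computations already indicated in the statements of Lemma \ref{constructn0} and the remark after \eqref{family of smooth n0 form}.
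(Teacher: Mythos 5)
Your proof is correct and follows essentially the same route as the paper: reduce everything to the condition $\partial_{M_p}(\phi(\tau)\lrcorner\Omega_p)=0$ via Lemma \ref{constructn0}, verify it term by term using the harmonicity of the $\phi_i$ (via Lemma \ref{iso}) and the gauge condition $\phi_I\lrcorner\Omega_p=\partial_{M_p}\psi_I$ from \eqref{charflat}, and deduce holomorphic dependence on $\tau$ from the power-series form of $\phi(\tau)$. Your extra details on the nowhere-vanishing claim and on the convergence justification are sound and only make explicit what the paper leaves implicit.
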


\begin{proof}  We call such family the canonical family of
holomorphic $(n,0)$-forms on the local deformation space of $M_p$.
 The fact that $\Omega(\tau)^c$ is a nowhere vanishing
holomorphic $(n,0)$-form on the fiber $M_\tau$ follows from its
definition and Lemma \ref{constructn0} directly. In fact we only
need to check that $\partial_{M_p}(\phi(\tau)\lrcorner\Omega_p)=0$.
By formulae \eqref{10} and \eqref{charflat} we know that
\begin{align*}
\phi(\tau)\lrcorner\Omega_p=\sum_{i=1}^N
\tau_i(\phi_i\lrcorner\Omega_p)+\sum_{|I|\geq 2}\tau^I
(\phi_I\lrcorner\Omega_p)= \sum_{i=1}^N
\tau_i(\phi_i\lrcorner\Omega_p)+\partial_{M_p}\left ( \sum_{|I|\geq
2}\tau^I\psi_I\right ).
\end{align*}

Because each $\phi_i$ is harmonic, by Lemma \ref{iso} we know that
$\phi_i\lrcorner\Omega_p$ is also harmonic and thus
$\partial_{M_p}(\phi_i\lrcorner\Omega_p)=0$. Furthermore, since
$\sum_{|I|\geq 2}\tau^I\psi_I$ converges when $|\tau|$ is small, we
see that $\partial_{M_p}(\phi(\tau)\lrcorner\Omega_p)=0$ from
formula \eqref{charflat}. The holomorphic dependence of
$\Omega^c_p(\tau)$ on $\tau$ follows from formula \eqref{can10} and
the fact that $\phi(\tau)$ depends on $\tau$ holomorphically.
\end{proof}

From Theorem \ref{flatcoord} and Proposition \ref{canonicalfamily} we get the
expansion of the deRham cohomology classes of the canonical family
of holomorphic $(n,0)$-forms. This expansion is closely related to the construction of the holomorphic affine structure on the
Teichm\"{u}ller space. We remark that one may also directly deduce this expansion from
the local Torelli theorem for Calabi-Yau manifold and the Griffiths transversality.
\begin{theorem}\label{expcoh}
Let $\Omega^c_p(\tau)$ be a canonical family defined by
\eqref{can10}.
Then we have the following expansion for $|\tau|<\epsilon$ small,
\begin{eqnarray}\label{cohexp10}
[\Omega^c_p(\tau)]=[\Omega_p]+\sum_{i=1}^N
\tau_i[\phi_i\lrcorner\Omega_p]+A(\tau),
\end{eqnarray}
where $\{[\phi_1\lrcorner\Omega_p], \cdots, [\phi_N\lrcorner\Omega_p]\}$ give a basis of $H^{n-1,1}(M_p)$ and $A(\tau)=O(|\tau|^2)\in \bigoplus_{k=2}^n H^{n-k,k}(M_p)$ denotes terms of
order at least $2$ in $\tau$.
\end{theorem}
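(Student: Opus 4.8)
The plan is to read off the expansion directly from the two structural facts established just above: the explicit form of the canonical family $\Omega^c_p(\tau)=e^{\phi(\tau)}\lrcorner\Omega_p$ together with the convergent series \eqref{10} for $\phi(\tau)$, and the cohomological triviality of the higher-order pieces coming from the third condition in \eqref{charflat}. So I would start by expanding the contraction $e^{\phi(\tau)}\lrcorner\Omega_p=\sum_{k\geq 0}\frac{1}{k!}\wedge^k\phi(\tau)\lrcorner\Omega_p$. The $k=0$ term gives $\Omega_p$. The $k=1$ term is $\phi(\tau)\lrcorner\Omega_p = \sum_{i=1}^N \tau_i(\phi_i\lrcorner\Omega_p) + \sum_{|I|\geq 2}\tau^I(\phi_I\lrcorner\Omega_p)$; by the third line of \eqref{charflat} each $\phi_I\lrcorner\Omega_p=\partial_{M_p}\psi_I$ is $\partial_{M_p}$-exact, hence (being already $\bar\partial_{M_p}$-closed, as checked in the proof of Proposition \ref{canonicalfamily}) it is $d$-exact, so it contributes nothing to the deRham class. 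Thus the linear-in-$\tau$ part of $[\Omega^c_p(\tau)]$ is exactly $\sum_{i=1}^N\tau_i[\phi_i\lrcorner\Omega_p]$. That these classes form a basis of $H^{n-1,1}(M_p)$ is the content of Lemma \ref{iso} (the isometry $\iota$ preserving the Hodge decomposition) applied to the basis $\{\phi_i\}$ of $\h^{0,1}(M_p,T^{1,0}_{M_p})\cong T_p^{1,0}\T$; equivalently it is the local Torelli theorem.

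\textbf{Second}, I would locate the remaining terms in the Hodge filtration. For $k\geq 1$ the form $\frac{1}{k!}\wedge^k\phi(\tau)\lrcorner\Omega_p$ is, pointwise, a contraction of $\Omega_p\in A^{n,0}$ by $k$ copies of a $(0,1)$-form-valued vector field, so it lies in $A^{n-k,k}(M_p)$; taking deRham classes and using that the family is a variation of Hodge structure (Griffiths transversality \eqref{griffiths transversality quotient version}, together with the fact that $[\Omega^c_p(\tau)]\in F^n_{M_\tau}$ moves horizontally), the degree-$k$-in-$\tau$ contribution to $[\Omega^c_p(\tau)]$ — expanded against the reference Hodge decomposition of $M_p$ — has its $(n,0)$ and $(n-1,1)$ components already accounted for, and its genuinely new part sits in $\bigoplus_{j=2}^n H^{n-j,j}(M_p)$. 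Collecting everything of order $\geq 2$ in $\tau$ into $A(\tau)$, I get $A(\tau)=O(|\tau|^2)\in\bigoplus_{k=2}^n H^{n-k,k}(M_p)$, which is the assertion. The convergence for $|\tau|<\epsilon$ is inherited from the convergence of \eqref{10} and of $\sum_{|I|\geq 2}\tau^I\psi_I$ in Theorem \ref{flatcoord}, since $\tau\mapsto[\Omega^c_p(\tau)]$ is holomorphic by Proposition \ref{canonicalfamily}.

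\textbf{The main obstacle} is the bookkeeping in the second step: a priori the degree-$k$-in-$\tau$ term $\frac{1}{k!}[\wedge^k\phi(\tau)\lrcorner\Omega_p]$ is a $(n-k,k)$-\emph{form} on $M_p$, but its cohomology class need not be of pure Hodge type $(n-k,k)$ with respect to $M_p$ — it can have components in $H^{n-j,j}(M_p)$ for various $j\le k$. The point to get right is that the $(n,0)$ and $(n-1,1)$ components of all the $k\geq 2$ terms vanish in cohomology. For $(n,0)$ this is clear since $[\Omega^c_p(\tau)]$ generates $F^n_{M_\tau}$ and one normalizes so that the $\Omega_p$-coefficient is $1$; for $(n-1,1)$ one argues either via Griffiths transversality (the derivative of the period map is injective and the $(n-1,1)$-part is linear in $\tau$ by local Torelli) or, more concretely, by reusing the exactness $\phi_I\lrcorner\Omega_p=\partial_{M_p}\psi_I$ together with formula \eqref{tian tod lemma} to show inductively that the higher wedge contractions $\wedge^k\phi(\tau)\lrcorner\Omega_p$, after projection to $\mathbb{H}^{n-1,1}(M_p)$, vanish. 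I would present the Griffiths-transversality argument as the clean route and remark that the differential-form computation via \eqref{tian tod lemma} gives the same conclusion, as already foreshadowed in the paragraph preceding the theorem.
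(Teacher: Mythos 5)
Your expansion of $e^{\phi(\tau)}\lrcorner\Omega_p$ into the constant, linear, $\partial$-exact, and higher-wedge pieces is exactly the paper's starting point, but the Hodge-theoretic step that converts forms into classes is where your argument breaks. You claim that each $\phi_I\lrcorner\Omega_p=\partial_{M_p}\psi_I$ is $\bar{\partial}_{M_p}$-closed and hence $d$-exact. This is false in general: since $\Omega_p$ is holomorphic, $\bar{\partial}_{M_p}(\phi_I\lrcorner\Omega_p)=(\bar{\partial}_{M_p}\phi_I)\lrcorner\Omega_p$, and the integrability equation forces $\bar{\partial}_{M_p}\phi_I=\tfrac{1}{2}\sum_{J+K=I}[\phi_J,\phi_K]\neq 0$ for $|I|\geq 2$. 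So the individual terms of the expansion are not $d$-closed (only the total form $\Omega^c_p(\tau)$ is), they are certainly not $d$-exact, and one cannot speak of their de Rham classes term by term. The same problem undercuts your treatment of $a(\tau)=\sum_{k\geq 2}\frac{1}{k!}\wedge^k\phi(\tau)\lrcorner\Omega_p$: the issue is not that the class of an $(n-k,k)$-form might fail to be of pure type, but that these forms have no class at all until something is done. The paper's mechanism is the harmonic projector: $\Omega^c_p(\tau)$ is $d$-closed, so $[\Omega^c_p(\tau)]=[\mathbb{H}_{M_p}(\Omega^c_p(\tau))]$; by linearity $\mathbb{H}_{M_p}$ fixes $\Omega_p$ and the harmonic forms $\phi_i\lrcorner\Omega_p$, annihilates the $\partial_{M_p}$-exact piece (on a compact K\"ahler manifold $\partial$-exact forms are $L^2$-orthogonal to harmonic forms), and sends $a(\tau)\in\bigoplus_{k\geq 2}A^{n-k,k}(M_p)$ into $\bigoplus_{k\geq 2}\mathbb{H}^{n-k,k}(M_p)$ because it preserves bidegree. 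That one line replaces both of your problematic steps.

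Your proposed ``clean route'' for the order $\geq 2$ terms via Griffiths transversality also does not close the gap. Transversality only places the order-$k$ Taylor coefficient of $[\Omega^c_p(\tau)]$ in $F^{n-k}_p=H^{n,0}_p\oplus\cdots\oplus H^{n-k,k}_p$; it does not exclude $(n,0)$- or $(n-1,1)$-components from the quadratic and higher terms. And local Torelli only asserts that the linear term is nondegenerate, not that the $H^{n-1,1}_p$-component of $[\Omega^c_p(\tau)]$ is \emph{exactly} linear in $\tau$ --- the latter is precisely the content of the theorem, and it comes from the gauge conditions \eqref{charflat} (harmonicity of the $\phi_i$ and $\partial_{M_p}$-exactness of the $\phi_I\lrcorner\Omega_p$), not from general VHS facts. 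Your instinct that the concrete differential-form computation is the argument that actually works is right; carried out via $\mathbb{H}_{M_p}$ as above, it is the paper's proof.
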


\begin{proof} By Theorem \ref{flatcoord} and Proposition \ref{canonicalfamily}
we have
\begin{align}\label{20000}
\begin{split}
\Omega^c_p(\tau)=& \Omega_p+\sum_{i=1}^N
\tau_i(\phi_i\lrcorner\Omega_p)+\sum_{|I|\geq
2}\tau^I(\phi_I\lrcorner\Omega_p)+\sum_{k\geq 2} \frac{1}{k!}
\left ( \wedge^k\phi(\tau)\lrcorner\Omega_p \right )\\
=&\Omega_p+\sum_{i=1}^N
\tau_i(\phi_i\lrcorner\Omega_p)+\pa_{M_p}\left ( \sum_{|I|\geq
2}\tau^I\psi_I\right ) +a(\tau),
\end{split}
\end{align}
where
\begin{align}\label{a(tau)}
a(\tau)=\sum_{k\geq 2} \frac{1}{k!} \left (
\wedge^k\phi(\tau)\lrcorner\Omega_p \right )\in \bigoplus_{k\geq
2}A^{n-k,k}(M).
\end{align}

By Hodge theory, we have
\begin{eqnarray}\label{20020}
\begin{split}
[\Omega^c_p(\tau)]&=[\Omega_p]+\sum_{i=1}^N
\tau_i[\phi_i\lrcorner\Omega_p]+\left[\mathbb{H}(\partial_{M_p}(
\sum_{|I|\geq
2}\tau^I\psi_I) )\right]+[\mathbb{H}(a(\tau))]\\
&=[\Omega_p]+\sum_{i=1}^N
\tau_i[\phi_i\lrcorner\Omega_p]+\left[\mathbb{H}(a(\tau))\right].
\end{split}
\end{eqnarray}
Let $A(\tau)=[\mathbb{H}(a(\tau))]$, then (\ref{a(tau)}) shows that
$A(\tau)\in \bigoplus_{k=2}^n H^{n-k,k}(M)$ and
$A(\tau)=O(|\tau|^2)$ which denotes the terms of order at least $2$ in $\tau$.
\end{proof}

In fact we have the following expansion of the canonical
family of $(n,0)$-classes up to order $2$ in $\tau$,
\begin{align*}
[\Omega^c_p(\tau)]=[\Omega_p]+\sum_{i=1}^N
\tau_i[\phi_i\lrcorner\Omega_p]+\frac{1}{2}\sum_{i,j}\tau_i\tau_j
\left [ \h(\phi_i\wedge\phi_j\lrcorner\Omega_p) \right ]+\Xi(\tau),
\end{align*}
where $\Xi(\tau)=O(|\tau|^3)$ denotes terms of order at least $3$ in $\tau$, and
$\Xi(\tau)\in \bigoplus_{k=2}^n H^{n-k,k}(M)$. This will not be
needed in this paper.

\section{Affine structure on the Teichm\"uller space}\label{AffineonT}
In Section \ref{boundedness of Phi}, we fix a base point $p\in\mathcal{T}$ and introduce the unipotent space $N_+\subseteq \check{D}$, which is biholomorphic to $\mathbb{C}^d$. Then we explain that the image $\Phi(\mathcal{T})$ is bounded in $N_+\cap D$ with respect to the Euclidean metric on $N_+$. In Section \ref{affineonT}, using the property that $\Phi(\mathcal{T})\subseteq N_+$ where $A\subseteq N_+$ is an Abelian subgroup, we define a holomorphic map $\Psi:\,\mathcal{T}\rightarrow A\cap D \subset A\simeq\mathbb{C}^N$. Then we use local Torelli theorem to show that $\Psi$ defines a local coordinate chart around each point in $\mathcal{T}$, and this shows that $\Psi: \,\mathcal{T}\rightarrow A\cap D$ defines a holomorphic affine structure on $\mathcal{T}$. In this section we will use some properties of the period domain from Lie group and Lie algebra point of view, which can be found in Appendix \ref{preliminary}. The detailed proofs of the main results in this section are all contained in \cite{LS}.

\subsection{Boundedness of the period map}\label{boundedness of Phi}          
Now let us fix the base point $p\in \mathcal{T}$ with $\Phi(p)\in D$. Then according to Remark \ref{n+} in the appendix, $N_+$ can be viewed as a subset in $\check{D}$ by identifying it with its orbit in $\check{D}$ with base point $\Phi(p)$.               
  Let us also fix an adapted basis $(\eta_0, \cdots, \eta_{m-1})$ for the Hodge decomposition of the base point $\Phi(p)\in D$. Then we can identify elements in $N_+$ with nonsingular block lower triangular matrices whose diagonal blocks are all identity submatrix. We define 
\begin{align*}
\check{\mathcal{T}}=\Phi^{-1}(N_+).
\end{align*} 
At the base point $\Phi(p)=o\in N_+\cap D$, the tangent space $T_{o}^{1,0}N_+=T_o^{1,0}D\simeq \mathfrak{n}_+\simeq N_+$, then the Hodge metric on $T_o^{1,0}D$ induces an Euclidean metric on $N_+$. In the proof of the following  theorem, for simplicity we require all the root vectors to be unit vectors with respect to this Euclidean metric.

\begin{theorem}\label{locallybounded}
The restriction of the period map $\Phi:\,\check{\mathcal{T}}\rightarrow N_+$ is bounded in $N_+$ with respect to the Euclidean metric on $N_+$. 
\end{theorem}


The proof of this theorem depends on a slight extension of Harish-Chandra's proof of his famous embedding theorem of the Hermitian symmetric domains as bounded domains in complex Euclidean spaces. One may refer to Lemma 7 and Lemma 8 at pp.~582--583 in \cite{HC}, Proposition 7.4 at pp.~385 and Ch VIII $\S7$ at pp.~382--396 in \cite{Hel}, Proposition 1 at pp.~91 and Proof of Theorem 1 at pp.~95--97 in \cite{Mok}, and Lemma 2.2.12 at pp.~141-142 and $\S5.4$ in \cite{Xu} for more details. See \cite{LS} for the detail of the proof of the above theorem.


According to the definition of $\check{\mathcal{T}}$, it is not hard to conclude the following lemma, one may refer to \cite{LS} for the complete proof. \begin{lemma}\label{codimension}The subset $\check{\mathcal{T}}$ is an open dense submanifold in $\mathcal{T}$, and $\mathcal{T}\backslash \check{\mathcal{T}}$ is an analytic subvariety of $\mathcal{T}$ with $\text{codim}_{\mathbb{C}}(\mathcal{T}\backslash \check{\mathcal{T}})\geq 1$. 
\end{lemma}

\begin{corollary}\label{image}The image of $\Phi:\,\mathcal{T}\rightarrow D$ lies in $N_+\cap D$ and is bounded with respect to the Euclidean metric on $N_+$. 
\end{corollary}
\begin{proof}According to Lemma \ref{codimension}, $\mathcal{T}\backslash\check{\mathcal{T}}$ is an analytic subvariety of $\mathcal{T}$ and the complex codimension of $\mathcal{T}\backslash\check{\mathcal{T}}$ is at least one; by Theorem\ref{locallybounded}, the holomorphic map $\Phi:\,\check{\mathcal{T}}\rightarrow N_+\cap D$ is bounded in $N_+$ with respect to the Euclidean metric. Thus by the Riemann extension theorem, there exists a holomorphic map $\Phi': \,\mathcal{T}\rightarrow N_+\cap D$ such that $\Phi'|_{_{\check{\mathcal{T}}}}=\Phi|_{_{\check{\mathcal{T}}}}$. Since as holomorphic maps, $\Phi'$ and $\Phi$ agree on the open subset $\check{\mathcal{T}}$, they must be the same on the entire $\mathcal{T}$. Therefore, the image of $\Phi$ is in $N_+\cap D$, and the image is bounded with respect to the Euclidean metric on $N_+.$ As a consequence, we also get $\mathcal{T}=\check{\mathcal{T}}=\Phi^{-1}(N_+).$ \end{proof}
\subsection{Holomorphic affine structure on the Teichm\"uller space}\label{affineonT}
We first review the definition of complex affine manifolds.
One may refer to page~215 of
\cite{Mats} for more details.
\begin{definition}
Let $M$ be a complex manifold of complex dimension $n$. If there
is a coordinate cover $\{(U_i,\,\phi_i);\, i\in I\}$ of M such
that $\phi_{ik}=\phi_i\circ\phi_k^{-1}$ is a holomorphic affine
transformation on $\mathbb{C}^n$ whenever $U_i\cap U_k$ is not
empty, then $\{(U_i,\,\phi_i);\, i\in I\}$ is called a complex
affine coordinate cover on $M$ and it defines a holomorphic affine structure on $M$.
\end{definition}

There is a canonical Euclidean subspace $A\subset N_{+}$ given by $\mathfrak{a}=\Phi_*(\text{T}_p^{1,0}\T)\subset \mathfrak{n}_{+}$, the abelian subalgebra, and $A=\exp(\mathfrak{a})$ the corresponding Lie group. We then define the holomorphic map 
$$\Psi :\, \check{\T} \to A\cap D,$$
by $\Psi=P \circ \Phi|_{\check{\T}}$, where $P$ is the projection map from $N_{+}\cap D$ to $A\cap D$. By Theorem \ref{image}, we can extend the holomorphic map $\Psi :\, \check{\T} \to A\cap D$ over $\T$ as
$$\Psi :\, {\T} \to A\cap D,$$
such that $\Psi=P \circ \Phi$.

By the local Torelli theorem for Calabi--Yau manifolds which implies that $\Psi$ is nondegenerate, and the definition of holomorphic affine structure, we can prove with the main theorem of this section. Again we refer the reader to \cite{LS} for the detail of its proof.
\begin{theorem}The holomorphic map $\Psi :\, {\T} \to A\cap D$ defines a local coordinate around each point $q\in{\mathcal{T}}$. Thus the map $\Psi$ itself gives a global holomorphic coordinate for ${\mathcal{T}}$ with the transition maps all identity maps. In particular, the global holomorphic coordinate $\Psi :\, {\T} \to A\cap D \subset A\simeq \C^{N}$ defines a holomorphic affine structure on ${\mathcal{T}}$. Therefore, ${\mathcal{T}}$ is a complex affine manifold.
\end{theorem}

Note that this affine structure on ${\mathcal{T}}$ depends on the choice of the base point $p$. Affine structures on ${\mathcal{T}}$ defined in this ways by fixing different base point may not be compatible with each other. 

\section{Hodge metric completion of the Torelli space}\label{THm}

This section contains a review of our extension of the affine structure to the Hodge metric completion $\mathcal{T}^H$ of the Torelli space $\mathcal{T}'$, as well as its consequences including a proof of the global Torelli theorem for polarized and marked Calabi--Yau manifolds on the Torelli space.

Recall that in Section \ref{section period map}, we have introduced the Hodge metric on the period domain $D$ and its pull backs on $\Z$, $\T$ and $\T'$ via the period maps from the corresponding spaces. 
By the work of Viehweg in \cite{v1}, we know that $\mathcal{Z}_m$ is quasi-projective and consequently we can find a smooth projective compactification $\bar{\mathcal{Z}}_{m}$ such that $\mathcal{Z}_m$ is Zariski open in $\bar{\mathcal{Z}}_{m}$ and the complement $\bar{\mathcal{Z}}_{m}\backslash\mathcal{Z}_m$ is a divisor of normal crossings. Therefore, $\mathcal{Z}_m$ is dense and open in $\bar{\mathcal{Z}}_{m}$ with the complex codimension of the complement $\bar{\mathcal{Z}}_{m}\backslash \mathcal{Z}_m$ at least one. Moreover as $\bar{\mathcal{Z}}_{m}$ is a compact space, it is a complete space.

Let us now take $\mathcal{Z}^H_{m}$ to be the completion of $\mathcal{Z}_m$ with respect to the Hodge metric. Then $\mathcal{Z}_{m}^H$ is the smallest complete space with respect to the Hodge metric that contains $\mathcal{Z}_m$. Although the compact space $\bar{\mathcal{Z}}_{m}$ may not be unique, the Hodge metric completion space $\mathcal{Z}^H_{m}$ is unique up to isometry, and is identified to the Griffiths extension as given in Theorem 9.5 and Theorem 9.6 in \cite{Griffiths3}. In particular, $\mathcal{Z}^H_{m}\subseteq\bar{\mathcal{Z}}_{m}$ and thus the complex codimension of the complement $\mathcal{Z}^H_{m}\backslash \mathcal{Z}_m$ is at least one. By Lemma 2.7 in \cite{LS}, we conclude the following useful result.
\begin{itemize}
\item The mectic completion $\mathcal{Z}^H_{m}$ is a dense and open smooth submanifold in $\bar{\mathcal{Z}}_m$ with $\text{codim}_{\mathbb{C}}(\mathcal{Z}^H_{m}\backslash \mathcal{Z}_m)\geq 1$, and $\mathcal{Z}^H_{m}\backslash \mathcal{Z}_m$ consists of those points in $\bar{\mathcal{Z}}_{m}$ around which the so-called Picard-Lefschetz transformations are trivial.
Moreover the extended period map $\Phi^H_{_{\mathcal{Z}_m}}:\, \mathcal{Z}^H_{m}\to D/\Gamma$ is proper and holomorphic.
\end{itemize}
The proof of this conclusion uses the Griffiths extension of the period map as wells as the basic definitions of metric completion. 

We denote by $\mathcal{T}^H_{m}$ the universal cover of $\mathcal{Z}_{m}^{H}$, which is now a connected and simply connected complete smooth complex manifold.
By using elementary topology argument, we show that the following commutative diagram holds:
\begin{align*}
\xymatrix{\mathcal{T}\ar[r]^{i_{m}}\ar[d]^{\pi_m}&\mathcal{T}^H_{m}\ar[d]^{\pi_{m}^H}\ar[r]^{{\Phi}^{H}_{m}}&D\ar[d]^{\pi_D}\\
\mathcal{Z}_m\ar[r]^{i}&\mathcal{Z}^H_{m}\ar[r]^{{\Phi}_{_{\mathcal{Z}_m}}^H}&D/\Gamma,
}
\end{align*}
where $i$ is the inclusion map, $i_m$ is a lifting of $i\circ \pi_{m}$, and $\Phi^H_{m}$ is a lifting of $\Phi^H_{_{\mathcal{Z}_m}}\circ \pi_{m}^H$, and we fix a suitable choice of $i_m$ and $\Phi^H_{m}$ such that $\Phi=\Phi^H_{m}\circ i_m$. 

Let us denote $\mathcal{T}_m:=i_{m}(\mathcal{T})$ and the restriction map $\Phi_m=\Phi^H_{m}|_{\mathcal{T}_m}$, then we also have $\Phi=\Phi_m\circ i_m$. Moreover, it is easy to see that $\Phi_m$ is also bounded by Corollary \ref{image}. 
By using argument of basic algebraic topology, we prove that the image $\mathcal{T}_m$ equals to the preimage $(\pi^H_{m})^{-1}(\mathcal{Z}_{m})$, and moreover $i_m:\, \T \to \T_m$ is a covering map. See Proposition 2.8 of \cite{LS} for the details.
Therefore, $\mathcal{T}_m$ is a connected open complex submanifold in $\mathcal{T}^H_{m}$ and $\text{codim}_{\mathbb{C}}(\mathcal{T}^H_{m}\backslash\mathcal{T}_m)\geq 1$. It is easy to see that $\mathcal{Z}_{m}^{H}\setminus \mathcal{Z}_{m}$ is an analytic subvariety of $\mathcal{Z}_{m}^{H}$, and hence the set $\mathcal{T}^H_{m}\backslash\mathcal{T}_m$ is also an analytic subvariety of $\mathcal{T}^H_{m}$.

Recall that we have fixed a base point $p\in\mathcal{T}$ and an adapted basis $\{\eta_0, \cdots, \eta_{m-1}\}$ for the Hodge decomposition of the base point $\Phi(p)\in D$. With the fixed base point in $D$, we can identify $N_+$ with its unipotent orbit in $\check{D}$.  Then applying the Riemann extension theorem to the bounded map $\Phi_m: \mathcal{T}_m\rightarrow N_+\cap D$, we obtain the following lemma.
\begin{lemma}\label{Riemannextension}
The map $\Phi^{H}_{m}$ is a bounded holomorphic map from $\mathcal{T}^H_{m}$ to $N_+\cap D$.
\end{lemma}

Now by using the Riemann extension theorem, we can extend the map $\Psi_m :\, \T_m \to A\cap D$, which is also bounded by Lemma \ref{Riemannextension}, to its extension
$$\Psi^{H}_{m} :\, \T^{H}_{m} \to A\cap D,$$
such that $\Psi^{H}_{m}=P\circ \Phi^{H}_{m}$, where $P$ is the projection map from $N_{+}\cap D$ to its subspace $A\cap D$.
Moreover, we also have $\Psi=P\circ\Phi=P\circ \Phi^H_{m}\circ i_m=\Psi^H_{m}\circ i_m$. Then by identifying the tangent bundle of $\T_m$ with Hodge bundles and the property that the holomorphic map $\Psi$ defines the holomorphic affine structure on $\mathcal{T}$, we can prove the following theorem.

\begin{theorem}\label{THmaffine}The holomorphic map $\Psi^H_{m}: \,\mathcal{T}^H_{m}\rightarrow A\cap D$ is a local embedding, therefore it defines a global holomorphic affine structure on $\mathcal{T}^H_{m}$. \end{theorem}

Now by the completeness of $\mathcal{T}^H_{m}$ with Hodge metric,  $\Psi^H_{m}$ is an isometry with Hodge metric at every point of $\mathcal{T}^H_{m}$, and a result of Griffiths and Wolf \cite{GW}, we can conclude that the holomorphic map $\Psi^H_{m}: \,\mathcal{T}^H_{m}\rightarrow A\cap D$ is a covering map. 
It is easy to show that $A\cap D$ is simply connected, therefore  $\Psi^H_{m}$ must be a biholomorphic map.


Moreover, as $\Psi^H_{m}=P\circ \Phi^{H}_{m}$, where $P$ is the projection map and $\Phi^{H}_{m}$ is a bounded map, we may conclude the injectivity of $\Phi^{H}_{m}$. To conclude, we have the following theorem, and one may refer to \cite{LS} for its detailed proof. 
\begin{theorem}\label{injectivityofPhiH}For any $m\geq 3$, the holomorphic map $\Psi^H_{m}:\, \mathcal{T}^H_{m}\to A\cap D$ is an injection and hence bi-holomorphic. In particular, the completion space $\mathcal{T}^H_{m}$ is a bounded domain $A\cap D$ in $A\simeq \mathbb{C}^N$. Moreover, the holomorphic map $\Phi^H_{m}: \,\mathcal{T}^H_{m}\rightarrow N_+\cap D$ is also an injection.
\end{theorem}
\section{Proof of global Torelli on Torelli space}\label{Proof}

In this section we describe the main idea of the proof of the global Torelli theorem on Torelli space. That is to prove that the period map $\Phi':\, \T'\to D$ is an injective holomorphic map. The key step of the proof is to first show that  $\T_m=i_m(\T) $ is independent of $m$, and that $\T_m$ is bi-holomorphic to the Torelli space $\T'$. 

To start, one first notes that one direct consequence of Theorem \ref{injectivityofPhiH} is the following proposition. 
\begin{proposition}\label{indepofm} For any $m, m'\geq 3$,
the complete complex manifolds $\mathcal{T}^H_m$ and $\mathcal{T}^H_{m'}$ are biholomorphic to each other.
\end{proposition}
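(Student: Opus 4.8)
The plan is to realize both $\mathcal{T}^H_{_m}$ and $\mathcal{T}^H_{m'}$ as open subsets of one and the same $\mathbb{C}^N$ and to prove that these subsets coincide. By Corollary \ref{embedTHmintoCN} the holomorphic map $\Psi^H_{_m}$ embeds $\mathcal{T}^H_{_m}$ into $\mathbb{C}^N$; since $\dim_{\mathbb{C}}\mathcal{T}^H_{_m}=N$, the image $\Omega_m:=\Psi^H_{_m}(\mathcal{T}^H_{_m})$ is an open subset of $\mathbb{C}^N$ and $\Psi^H_{_m}:\mathcal{T}^H_{_m}\to\Omega_m$ is a biholomorphism; likewise one obtains an open set $\Omega_{m'}\subseteq\mathbb{C}^N$ with $\Psi^H_{m'}:\mathcal{T}^H_{m'}\to\Omega_{m'}$ biholomorphic. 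Once $\Omega_m=\Omega_{m'}$ is proved, the composition $(\Psi^H_{m'})^{-1}\circ\Psi^H_{_m}:\mathcal{T}^H_{_m}\to\mathcal{T}^H_{m'}$ is the desired biholomorphism. First I would note that both $\Omega_m$ and $\Omega_{m'}$ contain the same open dense subset: since $\Psi=\Psi^H_{_m}\circ i_m$ and $\mathcal{T}_m=i_m(\mathcal{T})$, we have $S:=\Psi(\mathcal{T})=\Psi^H_{_m}(\mathcal{T}_m)$, which is open and dense in $\Omega_m$ with $\Omega_m\setminus S$ of complex codimension at least one, because $\mathcal{T}_m$ has these properties in $\mathcal{T}^H_{_m}$; the same holds for $\Omega_{m'}$, and crucially $S$ and the map $\Psi$ are independent of the level structure, since the period map $\Phi$ and hence $\Psi=P\circ\Phi$ are.

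Next I would transport the Hodge metric and check that it is intrinsic near $S$. Let $h_m$ be the push-forward to $\Omega_m$ of the Hodge metric of $\mathcal{T}^H_{_m}$ under the biholomorphism $\Psi^H_{_m}$; as $\mathcal{T}^H_{_m}$ is complete, so is $(\Omega_m,h_m)$. All Hodge metrics appearing in the commutative diagram \eqref{cover maps} are compatible pull-backs of the homogeneous Hodge metric on $D$ (resp.\ $D/\Gamma$); in particular $i_m:\mathcal{T}\to\mathcal{T}_m$ is a local isometry, so that $\Psi^*h_m=i_m^*(\Psi^H_{_m})^*h_m=i_m^*h_{\mathcal{T}^H_{_m}}=h_{\mathcal{T}}$, the Hodge metric of $\mathcal{T}$, which is again independent of $m$. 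Since $\Psi:\mathcal{T}\to S$ is a surjective local biholomorphism by Proposition \ref{Psi local embedding}, the equality $\Psi^*h_m=h_{\mathcal{T}}=\Psi^*h_{m'}$ forces $h_m|_S=h_{m'}|_S=:h_S$.

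I would then conclude $\Omega_m=\Omega_{m'}$ by a completeness argument. Because $\Omega_m\setminus S$ has real codimension at least two, the length metric of $(S,h_S)$ equals the restriction to $S$ of the distance function of $(\Omega_m,h_m)$, and the same holds for $\Omega_{m'}$. Hence, given $x\in\Omega_m$, pick $x_j\in S$ with $x_j\to x$ in $(\Omega_m,h_m)$; then $(x_j)$ is $h_S$-Cauchy, hence Cauchy in the complete space $(\Omega_{m'},h_{m'})$ (again using the thin complement of $S$ in $\Omega_{m'}$), so $x_j\to y$ for some $y\in\Omega_{m'}$; since convergence in either Riemannian metric implies convergence in $\mathbb{C}^N$, we get $x=y\in\Omega_{m'}$, so $\Omega_m\subseteq\Omega_{m'}$, and by symmetry $\Omega_m=\Omega_{m'}$.

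The step I expect to require the most care is the one asserting that the Hodge metrics transported from $\mathcal{T}^H_{_m}$ and from $\mathcal{T}^H_{m'}$ genuinely agree on the common open dense set $S$; this rests entirely on the $m$-independence of $\mathcal{T}$, of $\Phi$, of $\Psi$ and of the Hodge metric on $\mathcal{T}$, together with the local-isometry property of $i_m$ read off from diagram \eqref{cover maps}. One also has to justify passing between the length metric of $S$ and the distance functions of the $\Omega$'s, which uses the thin-complement property. Alternatively, the proposition can be deduced from the uniqueness up to isometry of the Hodge metric completion proved in \cite{CGL}, applied to $\mathcal{T}$ equipped with its level-independent Hodge metric, the resulting isometry being promoted to a biholomorphism by means of the affine embeddings $\Psi^H_{_m}$ of Corollary \ref{embedTHmintoCN}.
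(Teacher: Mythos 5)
Your argument is correct, and since the paper defers the proof of Proposition \ref{indepofm} entirely to \cite{CGL}, it serves as a legitimate self-contained substitute built from exactly the ingredients already established here: the affine embeddings $\Psi^H_{_m}$ of Corollary \ref{embedTHmintoCN}, the common open dense image $S=\Psi(\mathcal{T})$ (using the $m$-independence of $\mathcal{T}$, of $\Phi$, and hence of $\Psi=P\circ\Phi$), and Hodge-metric completeness. The mechanism --- comparing the two completions through the common dense copy of $\mathcal{T}$ --- is the same one used in \cite{CGL}; your variant transports everything into $\mathbb{C}^N$ first, which buys you the ability to compare the two candidate limits as points of a single ambient space rather than having to construct an abstract isometry and then argue it is holomorphic. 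Two steps deserve explicit justification. First, that $i_m$ is a local isometry for the Hodge metrics: this does follow from the commutativity of diagram \eqref{cover maps}, since the metric on $\mathcal{T}$ is $\Phi^*h$, the metric on $\mathcal{T}_m$ is the pull-back under $\pi^H_{_m}$ of the Hodge metric on $\mathcal{Z}_m$, and $\pi_D\circ\Phi_m=\Phi^H_{_{\mathcal{Z}_m}}\circ\pi^H_{_m}$, but it should be said rather than only asserted. Second, the identification of the length metric of $(S,h_S)$ with the restriction of the distance function of $(\Omega_m,h_m)$ needs more than ``complex codimension at least one'' for $\Omega_m\setminus S$: you need that this complement is (contained in) a closed analytic subvariety of real codimension two, so that paths in $\Omega_m$ can be perturbed off it with arbitrarily small increase in length. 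This is available because $\mathcal{Z}^H_{_m}\setminus\mathcal{Z}_m$ lies inside the normal-crossing boundary divisor of Viehweg's compactification $\bar{\mathcal{Z}}_m$, and $\Omega_m\setminus S$ is the image under the biholomorphism $\Psi^H_{_m}$ of the preimage of this set under the covering map; once that is invoked, the completeness argument closes as you describe.
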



This allows us to introduce the new simplified notations by dropping the level $m$.

\begin{definition}We define the complete complex manifold $\mathcal{T}^H=\mathcal{T}^H_{m}$, the holomorphic map $i_{\mathcal{T}}: \,\mathcal{T}\to \mathcal{T}^H$ by $i_{\mathcal{T}}=i_m$, and the extended period map $\Phi^H:\, \mathcal{T}^H\rightarrow D$ by $\Phi^H=\Phi^H_{m}$ for any $m\geq 3$.
In particular, with these new notations, we have the commutative diagram
\begin{align}\label{main diagram}
\xymatrix{\mathcal{T}\ar[r]^{i_{\mathcal{T}}}\ar[d]^{\pi_m}&\mathcal{T}^H\ar[d]^{\pi^H_{m}}\ar[r]^{{\Phi}^{H}}&D\ar[d]^{\pi_D}\\
\mathcal{Z}_m\ar[r]^{i}&\mathcal{Z}^H_{m}\ar[r]^{{\Phi}_{_{\mathcal{Z}_m}}^H}&D/\Gamma.
}
\end{align}
\end{definition}

Let $\T_{0}\subset \T^{H}$ be defined by $\T_{0}:=i_{\T}(\T)$. Since $\mathcal{T}_{0}\simeq \mathcal{T}_m=(\pi^H_{m})^{-1}(\mathcal{Z}_m)$ for any $m\ge 3$, $\pi_m^H:\, \mathcal{T}_0\to \mathcal{Z}_m$ is a covering map. Thus the fundamental group of $\mathcal{T}_0$ is a subgroup of the fundamental group of $\mathcal{Z}_m$, that is, $ \pi_1(\mathcal{T}_0)\subseteq \pi_1(\mathcal{Z}_m)$ for any $m\geq 3$.
Moreover, the universal property of the universal covering map $\pi_m: \,\mathcal{T}\to \mathcal{Z}_m$ with the identity $i_m\circ \pi_m=\pi_{m}^H|_{\mathcal{T}_0}\circ i_{\mathcal{T}}$
implies that $i_{\mathcal{T}}:\,\mathcal{T}\to \mathcal{T}_0$ is also a covering map, as discussed in Section \ref{THm}.

On the other hand, let $\{m_k\}_{k=1}^{\infty}$ be a sequence of positive integers such that $m_k< m_{k+1}$ and $m_k|m_{k+1}$ for each $k\geq 1$. From the discussion of Lecture 10 of \cite{Popp}, or Page 5 of \cite{sz}, there is a natural covering map from $\mathcal{Z}_{m_{k+1}}$ to  $\mathcal{Z}_{m_{k}}$ for each $k$.
Thus the fundamental group $\pi_1(\mathcal{Z}_{m_{k+1}})$ is a subgroup of $\pi_1(\mathcal{Z}_{m_{k}})$ for each $k$. Hence, the inverse system of fundamental groups
\begin{align*}
\xymatrix{
\pi_1(\mathcal{Z}_{m_1})&\pi_1(\mathcal{Z}_{m_2})\ar[l]&\cdots\cdots\ar[l]&\pi_1(\mathcal{Z}_{m_k})\ar[l]&\cdots\ar[l]
}
\end{align*}
has an inverse limit, which is the fundamental group of the Torelli space $\mathcal{T}'$. Since $\pi_1(\mathcal{T}_0)\subseteq \pi_1(\mathcal{Z}_{m_{k}})$ for any $k$, we have the inclusion $\pi_1(\mathcal{T}_0)\subseteq\pi_1(\mathcal{T}')$. This implies that $\T_0$ is a covering of $\T'$. Let $\pi_{0}:\, \T_{0}\to \T'$ be the covering map. Then the covering map $\pi_{0}:\, \T_{0}\to \T'$ together with diagram \eqref{cover1} and \eqref{main diagram} implies that the following diagram is commutative
$$\xymatrix{
\T \ar[dr]^-{\pi}\ar[rr]^-{i_{\T}} \ar[dd]^-{\pi_m} &&\T_{0}\ar[dl]^-{\pi_{0}}  \ar[dd]^-{\pi_{m}^{H}|_{\T_{0}}}\ar[rr]^-{\Phi^{H}|_{\T_{0}}}  &&D\ar[dd]^-{\pi_{D}}\\
&\T'\ar[dl]_-{\pi'_{m}}\ar[urrr]^-{\Phi'}&&&\\
\mathcal{Z}_{m} \ar[rr]^-{i_{m}} &&\mathcal{Z}_{m}^{H} \ar[rr]^-{\Phi_{\mathcal{Z}_{m}^{H}}} &&D/\Gamma .}$$

Since $\Phi^{H}:\, \T^{H}\to D$ is injective, so is the restriction map $\Phi^{H}|_{\T_{0}}:\, \T_{0}\to D$, which implies the injectivity of the map $\pi_{0}:\, \T_{0}\to \T'$ by the relation $\Phi^{H}|_{\T_{0}}=\Phi'\circ \pi_0$.
Therefore $\pi_{0}:\, \T_{0}\to \T'$ is biholomorphic. 

In a summary, we have proved the following proposition. Note that in \cite{LS} we give a more elementary proof of this result by directly constructing the map $\pi_0$.
\begin{proposition}\label{i_T}
Let $\T_{0}\subset \T^{H}$ be defined by $\T_{0}:=i_{\T}(\T)$. Then $\T_{0}$ is biholomorphic to the Torelli space $\T'$.
\end{proposition}

From Proposition \ref{i_T}, we can see that the complete complex manifold $\mathcal{T}^H$ is actually the completion space of the Torelli space $\T'$ with respect to the Hodge metric.

Since the restriction map $\Phi^{H}|_{\T_{0}}$ is injective and $\Phi'=\Phi^{H}|_{\T_{0}}\circ (\pi_{0})^{-1}$, we get the global Torelli theorem for the period map $\Phi':\, \T'\to D$ from the Torelli space to the period domain as follows.
\begin{theorem}[Global Torelli theorem]
The period map $\Phi':\, \mathcal{T}'\rightarrow D$ is injective.
\end{theorem}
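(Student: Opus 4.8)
The plan is to obtain the injectivity of $\Phi:\,\mathcal{T}\to D$ by factoring the period map through the Hodge metric completion and assembling the two main ingredients already in place: the injectivity of the extended period map on $\mathcal{T}^H$ and the fact that the natural map $i_{\mathcal{T}}:\,\mathcal{T}\to\mathcal{T}^H$ is an embedding. So the last step is essentially bookkeeping, but it is worth spelling out what it rests on.

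First I would invoke the commutative diagram from the previous section, in which $\mathcal{T}^H=\mathcal{T}^H_{_m}$ is defined (independently of the level $m\geq 3$, by Proposition \ref{indepofm}), $\Phi^H=\Phi^H_{_m}:\,\mathcal{T}^H\to N_+\cap D$ is the extended period map, and the chosen liftings satisfy $\Phi=\Phi^H\circ i_{\mathcal{T}}$. Next, by Theorem \ref{main theorem}, $\Phi^H:\,\mathcal{T}^H\to N_+\cap D$ is a holomorphic injection: this is the genuinely substantive input, and it comes from Theorem \ref{THmaffine} (the holomorphic affine structure on $\mathcal{T}^H$ making $\Psi^H_{_m}$ affine), the Hodge metric completeness of $\mathcal{T}^H$, Lemma \ref{straightline} (any two points of the complete affine manifold $\mathcal{T}^H$ are joined by a straight line), and the local embedding property of $\Psi^H_{_m}$ from Lemma \ref{injofPsi}; together these force $\Psi^H_{_m}$, hence $\Phi^H=P^{-1}$-lift of $\Psi^H_{_m}$, to be injective (Corollary \ref{injective PhiHm}). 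Then I would apply Lemma \ref{injectivity of i}, which states that $i_{\mathcal{T}}:\,\mathcal{T}\to\mathcal{T}^H$ is an embedding, in particular injective. Finally, a composition of injective maps is injective, so $\Phi=\Phi^H\circ i_{\mathcal{T}}$ is injective, which is exactly the global Torelli statement; and since $\Phi$ is a local embedding by the local Torelli theorem for Calabi--Yau manifolds, it is in fact an embedding, recovering Theorem \ref{toerllimainintro}.

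The main obstacle is not this final concatenation but the two facts it consumes. The first is the injectivity of $\Psi^H_{_m}$: one must transport the holomorphic affine structure and local embedding property of $\Psi$ on $\mathcal{T}$ (which come from Theorem \ref{expcoh}, Proposition \ref{Psi local embedding}, and Theorem \ref{affine structure}) first to the open dense submanifold $\mathcal{T}_m=i_m(\mathcal{T})$ and then, via Riemann extension across a complement of codimension at least one, to the full completion $\mathcal{T}^H_{_m}$, and only after that can one run the straight-line argument using completeness. The second is Lemma \ref{injectivity of i} itself, namely that $i_{\mathcal{T}}$ is an embedding and not merely a local biholomorphism onto a dense open set; the key point there is the identification $\mathcal{T}_m=(\pi^H_{_m})^{-1}(\mathcal{Z}_m)$ of Proposition \ref{opend} together with the injectivity of $\Psi^H_{_m}$. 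Both of these are established in \cite{CGL}, and here I would simply cite them and carry out the short composition above.
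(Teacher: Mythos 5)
Your proof is correct and follows exactly the paper's argument: the paper proves Theorem \ref{Global Torelli theorem} by writing $\Phi=\Phi^H\circ i_{\mathcal{T}}$ and composing the injectivity of $\Phi^H$ (Theorem \ref{main theorem}, resting on Corollary \ref{injective PhiHm} and ultimately the straight-line argument of Theorem \ref{injectivityofPhiH}) with the embedding property of $i_{\mathcal{T}}$ (Lemma \ref{injectivity of i}). Your identification of the two substantive inputs and their provenance in \cite{CGL} matches the paper's own structure, so there is nothing to add.
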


We refer the reader to \cite{LS} for all of the details in the above proofs. 

\section{Global canonical sections of the holomorphic $(n,0)$-classes}\label{globalSection}
In this section we extend the local canonical sections of the holomorphic $(n,0)$-classes to the global canonical sections on the Hodge completion space $\T^{H}$, by constructing global holomorphic sections of the Hodge bundles $\{F^k\}_{k=0}^n$ over $\mathcal{T}^H$. Same results hold on the Techm\"uller space $\T$ by pulling back through the covering map $i_\T:\, \T\to \T^H$.

Recall that we have fixed a base point $p\in\mathcal{T}^{H}$ and an adapted basis $\{\eta_0, \cdots, \eta_{m-1}\}$ for the Hodge decomposition of the base point $\Phi^{H}(p)\in D$. With the fixed base point in $D$, we can identify the unipotent group $N_+$ with its unipotent orbit in $\check{D}$ by identifying an element $c\in N_+$ with $[c]=cB$ in $\check{D}$.

On one hand, as we have fixed an adapted basis $\{\eta_0, \cdots, \eta_{m-1}\}$ for the Hodge decomposition of the base point, the elements in $G_{\mathbb{C}}$ can be identified with a subset of the nonsingular block matrices. In particular, the elements in $N_+$ can be realized as nonsingular block lower triangular matrices whose diagonal blocks are all identity submatrix. Namely, for any element $\{F^k_o\}_{k=0}^n\in N_+\subseteq\check{D}$, there exists a unique nonsingular block lower triangular matrices $A(o)\in G_{\mathbb{C}}$ such that
$(\eta_0, \cdots, \eta_{m-1})A(o)$ is an adapted basis for the Hodge filtration $\{F^k_o\}\in N_+$ that represents this element in $N_+$. 

Similarly, any elements in $B$ can be realized as nonsingular block upper triangular matrices in $G_{\mathbb{C}}$. Moreover, as $\check{D}=G_{\mathbb{C}}/B$, we have that for any $U\in G_{\mathbb{C}}$, which is a nonsingular block upper triangular matrix, $(\eta_0, \cdots, \eta_{m-1})A(o)U$ is also an adapted basis for the Hodge filtration $\{F^k(o)\}_{k=0}^n$. Conversely, if $(\zeta_0, \cdots, \zeta_{m-1})$ is an adapted basis for the Hodge filtration $\{F^k_o\}_{k=0}^n$, then there exists a unique $U\in G_{\mathbb{C}}$ such that $(\zeta_0, \cdots, \zeta_{m-1})=(\eta_0, \cdots, \eta_{m-1})A(o)U$. For any $q\in\mathcal{T}^{H}$, let us denote the Hodge filtration at $q\in \mathcal{T}^{H}$ by $\{F_q^k\}_{k=0}^n$, and we have that $\{F_q^k\}_{k=0}^n\in N_+\cap D$ by Theorem \ref{injectivityofPhiH}. Thus there exists a unique nonsingular block lower triangular matrices $\tilde{A}(q)$ such that $(\eta_0, \cdots, \eta_{m-1})\tilde{A}(q)$ is an adapted basis for the Hodge filtration $\{F^k_q\}_{k=0}^{n}$.

On the other hand, for any adapted basis $\{\zeta_0(q), \cdots, \zeta_{m-1}(q)\}$ for the Hodge filtration $\{F^k_q\}_{k=0}^n$ at $q$, we know that there exists an $m\times m$ transition matrix $A(q)$ such that
$(\zeta_0(q), \cdots, \zeta_{m-1}(q))=(\eta_0, \cdots, \eta_{m-1})A(q).$
Moreover, we set the blocks of $A(q)$ as in \eqref{block} and denote the ${(i,j)}$-th block of $A(q)$ by $A^{i,j}(q)$.

As both $(\eta_0, \cdots, \eta_{m-1})\tilde{A}(q)$ and $(\eta_0, \cdots, \eta_{m-1})A(q)$ are adapted bases for the Hodge filtration for $\{F^k_q\}_{k=0}^n$, there exists a $U\in G_{\mathbb{C}}$ which is a block nonsingular upper triangular matrix such that $(\eta_0, \cdots, \eta_{m-1})\tilde{A}(q)U=(\eta_0, \cdots, \eta_{m-1})A(q).$
Therefore, we conclude that
\begin{align}\label{upper lower}\tilde{A}(q)U=A(q).
\end{align}
where $\tilde{A}(q)$ is a nonsingular block lower triangular matrix in $G_{\mathbb{C}}$ with all the diagonal blocks equal to identity submatrix, while $U$ is a block upper triangular matrix in $G_{\mathbb{C}}$. However, according to basic linear algebra, we know that a nonsingular matrix $A(q)\in G_{\mathbb{C}}$ have the decomposition of the type in \eqref{upper lower} if and only if the principal submatrices $[A^{i,j}(q)]_{0\leq i,j\leq n-k}$ are nonsingular for all $0\leq k\leq n$.

To conclude, by Theorem \ref{injectivityofPhiH}, we have that $\Phi^{H}(q)\in N_+$ for any $q\in \mathcal{T}^{H}$. Therefore, for any adapted basis $(\zeta_0(q), \cdots, \zeta_{m-1}(q))$, there exists a nonsingular block matrix $A(q)\in G_{\mathbb{C}}$ with $\det[A^{i,j}(q)]_{0\leq i,j\leq n-k}\neq 0$ for any $0\leq k\leq n$ such that
$(\zeta_0(q), \cdots, \zeta_{m-1}(q))=(\eta_0, \cdots, \eta_{m-1})A(q).$
Let $\{F^k_p\}_{k=0}^n$ be the reference Hodge filtration at the base point $p\in\mathcal{T}^{H}$.
For any point $q \in \mathcal{T}^H$ with the corresponding Hodge filtrations $\{F^k_q\}_{k=0}^n$, we define the following maps
\begin{align*}
P^k_q: \, F^k_q\to F^k_p \quad\text{for any}\quad 0\leq k\leq n
\end{align*}
to be the projection map with respect to the Hodge decomposition at the base point $p$. With the above notation, we therefore have the following lemma.  
\begin{lemma}\label{globalTransversality}
For any point $q\in\mathcal{T}^H$ and $0\leq k\leq n$, the map $P_q^k:\, F^k_q\to F^k_p$ is an isomorphism. Furthermore, $P_q^k$ depends on $q$ holomorphically.
\end{lemma}
\begin{proof}
We have already fixed $\{\eta_0, \cdots, \eta_{m-1}\}$ as an adapted basis for the Hodge decomposition of the Hodge structure at the base point $p$. Thus it is also the adapted basis for the Hodge filtration $\{F^k_p\}_{k=0}^n$ at the base point.
For any point $q\in \mathcal{T}^{H}$, let $\{\zeta_0, \cdots, \zeta_{m-1}\}$ be an adapted basis for the Hodge filtration $\{F^k_q\}_{k=0}^n$ at $q$. Let $[A^{i,j}(q)]_{0\leq i,j\leq n}\in G_{\mathbb{C}}$ be the transition matrix between the basis $\{\eta_0,\cdots, \eta_{m-1}\}$ and $\{\zeta_0, \cdots, \zeta_{m-1}\}$ for the same vector space $H^{n}(M, \mathbb{C})$. We have showed that $[A^{i,j}(q)]_{0\leq i,j\leq n-k}$ is nonsingular for all $0\leq k\leq n$.

On the other hand, the submatrix $[A^{i,j}(q)]_{0\leq j\leq n-k}$ is the transition matrix between the bases of $F^k_q$ and $F^0_p$ for any $0\leq k\leq n$, that is
\begin{align*}
(\zeta_0(q), \cdots, \zeta_{f^k-1}(q))=(\eta_0, \cdots, \eta_{m-1})[A^{i,j}(q)]_{0\leq j\leq n-k} \quad\text{for any}\quad 0\leq k\leq n,
\end{align*}
where $(\zeta_0(q), \cdots, \zeta_{f^k-1}(q))$ and $(\eta_0, \cdots, \eta_{m-1})$ are the bases for $F^k_q$ and $F^0_p$ respectively.
Thus the matrix of $P_q^k$ with respect to $\{\eta_0, \cdots, \eta_{f^k-1}\}$ and $\{\zeta_0, \cdots, \zeta_{f^k-1}\}$ is the first $(n-k+1)\times (n-k+1)$ principal submatrix $[A^{i,j}(q)]_{0\leq i,j\leq {n-k}}$ of $[A^{i,j}(q)]_{0\leq i,j\leq n}$. Now since $ [A^{i,j}(q)]_{0\leq i,j\leq {n-k}}$ for any $0\leq k\leq n$ is nonsingular, we conclude that the map $P_q^k$ is an isomorphism for any $0\leq k\leq n$.

From our construction, it is clear that the projection $P_q^k$ depends on $q$ holomorphically.

\end{proof}

Using this lemma, we are ready to construct the global holomorphic sections of Hodge bundles over $\mathcal{T}^H$. 
For any $0\leq k\leq n$, we know that $\{\eta_0, \eta_1, \cdots, \eta_{f^{k}-1}\}$ is an adapted basis of the Hodge decomposition of $F^k_p$ for any $0\leq k\leq n$. Then we define the sections
\begin{align}\label{sections}
s_i: \mathcal{T}^H\rightarrow F^{k}, \quad q\mapsto (P_q^k)^{-1}(\eta_i)\in F^k_q\quad\text{for any}\quad 0\leq i\leq f^{k}-1.
\end{align}
%
Lemma \ref{globalTransversality} implies that $\{(P_q^k)^{-1}(\eta_i)\}_{i=0}^{f^k-1}$ form a basis of $F^k_q$ for any $q\in \mathcal{T}^H$. In fact, we have proved the following theorem for polarized and marked Calabi--Yau manifolds.
\begin{theorem}\label{trivial bundle}
For all $0\leq k\leq n$, the Hodge bundles $F^k$ over $\mathcal{T}^H$ are trivial bundles, and the trivialization can be obtained by $\{s_i\}_{0\leq i\leq f^k-1}$ which is defined in \eqref{sections}. In particular, the section $s_0: \mathcal{T}^H\to F^n$ is a global nowhere zero section of the Hodge bundle $F^n$ for Calabi--Yau manifolds.
\end{theorem}

With the adapted basis at the base point $p\in \T^{H}$, we can also see $\Phi_*(\text{T}^{1,0}_{p}(\T))=\mathfrak{a}\subset \mathfrak{n}_{+}$ as a block lower triangle matrix whose diagonal elements are zero. Moreover by local Torelli theorem for Calabi--Yau manifolds, we can conclude that $\mathfrak{a}$ is isomorphic to its $(1,0)$-block as vector spaces, see \eqref{block} for the definition. Let $(\tau_{1},\cdots ,\tau_{N})^{T}$ be the $(1,0)$-block of $\mathfrak{a}$. 
Sine the affine structure on $A$ is induced by $\exp:\, \mathfrak{a}\to A$ which is an isomorphism, $(\tau_{1},\cdots ,\tau_{N})^{T}$ also defines a global affine structure on $A$, and hence on $\T^{H}$. We denote it by
$$\tau^{H}:\, \T^{H} \to \C^{N},\quad  q\mapsto (\tau_{1}(q),\cdots ,\tau_{N}(q)).$$
Note that from linear algebra, it is easy to see that the $(1,0)$-block of $A=\exp(\mathfrak{a})$ is still $(\tau_{1},\cdots ,\tau_{N})^{T}$. Hence the affine map defined as above can be constructed as the $(1,0)$-block of the image of the period map.
More precisely, let $P^{1,0}:\, N_{+}\to \C^{N}$ to be the projection of the matrices in $N_{+}$ onto their $(1,0)$-blocks. Then the affine map is
$$\tau^{H}=P^{1,0}\circ \Phi^{H}: \, \T^{H} \to \C^{N}.$$
Moreover $\tau^{H}$ is injective, and hence it defines another embedding of $\T^{H}$ into $\C^{N}$.
\begin{remark}
The global affine coordinate $\tau^{H}: \, \T^{H} \to \C^{N}$ extends the local holomorphic flat coordinate around the base point $p\in \T^{H}$ in Theorem \ref{flatcoord}.
\end{remark}

Using the same notation as in Lemma \ref{expcoh}, we are ready to prove the following theorem for Calabi--Yau manifolds,
\begin{theorem}\label{globalExp}
Choose $[\Omega_p]=\eta_0$, then the section $s_0$ of $F^n$ is a global holomorphic extension of the local section $[\Omega^c_p(\tau)]$.
\end{theorem}
\begin{proof}
Because both $s_0$ and $[\Omega^c_p(\tau)]$ are holomorphic sections of $F^n$, we only need to show that $s_0|_{U_p}=[\Omega^c_p(\tau)]$. In fact, from the expansion formula \eqref{cohexp10}, we have that for any $q\in U_p$
\begin{align*}
P^n_q([\Omega^c_p(\tau(q))])=[\Omega_p]=\eta_0.
\end{align*}
Therefore, $[\Omega^c_p(\tau(q))]=(P^n_q)^{-1}(\eta_0)=s_0(q)$ for any point $q\in U_p$.
\end{proof}

\appendix
\section{Classifying spaces and Hodge structures from Lie group point of view. }\label{preliminary}
In this appendix, we review some properties of the period domain from Lie group and Lie algebra point of view. All of the results in this section is well-known to the experts in the subject. The purpose to give details is to fix notations. One may either skip this section or refer to \cite{GS} and \cite{schmid1} for most of the details. 

The orthogonal group of the bilinear form $Q$ in the definition of Hodge structure is a linear algebraic group, defined over $\mathbb{Q}$. Let us simply denote $H_{\mathbb{C}}=H^n(M, \mathbb{C})$ and $H_{\mathbb{R}}=H^n(M, \mathbb{R})$. The group of the $\mathbb{C}$-rational points is
\begin{align*}
G_{\mathbb{C}}=\{ g\in GL(H_{\mathbb{C}})|~ Q(gu, gv)=Q(u, v) \text{ for all } u, v\in H_{\mathbb{C}}\},
\end{align*}
which acts on $\check{D}$ transitively. The group of real points in $G_{\mathbb{C}}$ is
\begin{align*}
G_{\mathbb{R}}=\{ g\in GL(H_{\mathbb{R}})|~ Q(gu, gv)=Q(u, v) \text{ for all } u, v\in H_{\mathbb{R}}\},
\end{align*}
which acts transitively on $D$ as well.

Consider the period map $\Phi:\, \mathcal{T}\rightarrow D$. Fix a point $p\in \mathcal{T}$ with the image $o:=\Phi(p)=\{F^n_p\subset \cdots\subset F^{0}_p\}\in D$. The points $p\in \mathcal{T}$ and $o\in D$ may be referred as the base points or the reference points. A linear transformation $g\in G_{\mathbb{C}}$ preserves the base point if and only if $gF^k_p=F^k_p$ for each $k$. Thus it gives the identification
\begin{align*}
\check{D}\simeq G_\mathbb{C}/B\quad\text{with}\quad B=\{ g\in G_\mathbb{C}|~ gF^k_p=F^k_p, \text{ for any } k\}.
\end{align*}
Similarly, one obtains an analogous identification
\begin{align*}
D\simeq G_\mathbb{R}/V\hookrightarrow \check{D}\quad\text{with}\quad V=G_\mathbb{R}\cap B,
\end{align*}
where the embedding corresponds to the inclusion $
G_\mathbb{R}/V=G_{\mathbb{R}}/G_\mathbb{R}\cap B\subseteq G_\mathbb{C}/B.$
The Lie algebra $\mathfrak{g}$ of the complex Lie group $G_{\mathbb{C}}$ can be described as
\begin{align*}
\mathfrak{g}&=\{X\in \text{End}(H_\mathbb{C})|~ Q(Xu, v)+Q(u, Xv)=0, \text{ for all } u, v\in H_\mathbb{C}\}.
\end{align*}
It is a simple complex Lie algebra, which contains
$\mathfrak{g}_0=\{X\in \mathfrak{g}|~ XH_{\mathbb{R}}\subseteq H_\mathbb{R}\}$
as a real form, i.e. $\mathfrak{g}=\mathfrak{g}_0\oplus i \mathfrak{g}_0.$ With the inclusion $G_{\mathbb{R}}\subseteq G_{\mathbb{C}}$, $\mathfrak{g}_0$ becomes Lie algebra of $G_{\mathbb{R}}$. One observes that the reference Hodge structure $\{H^{k, n-k}_p\}_{k=0}^n$ of $H^n(M,{\mathbb{C}})$ induces a Hodge structure of weight zero on
$\text{End}(H^n(M, {\mathbb{C}})),$ namely,
\begin{align*}
\mathfrak{g}=\bigoplus_{k\in \mathbb{Z}} \mathfrak{g}^{k, -k}\quad\text{with}\quad\mathfrak{g}^{k, -k}=\{X\in \mathfrak{g}|XH_p^{r, n-r}\subseteq H_p^{r+k, n-r-k}\}.
\end{align*}
Since the Lie algebra $\mathfrak{b}$ of $B$ consists of those $X\in \mathfrak{g}$ that preserves the reference Hodge filtration $\{F_p^n\subset\cdots\subset F^0_p\}$, one thus has
\begin{align*}
 \mathfrak{b}=\bigoplus_{k\geq 0} \mathfrak{g}^{k, -k}.
\end{align*}
The Lie algebra $\mathfrak{v}_0$ of $V$ is 
$\mathfrak{v}_0=\mathfrak{g}_0\cap \mathfrak{b}=\mathfrak{g}_0\cap \mathfrak{b}\cap\bar{\mathfrak{b}}=\mathfrak{g}_0\cap \mathfrak{g}^{0, 0}.$
With the above isomorphisms, the holomorphic tangent space of $\check{D}$ at the base point is naturally isomorphic to $\mathfrak{g}/\mathfrak{b}$.

Let us consider the nilpotent Lie subalgebra $\mathfrak{n}_+:=\oplus_{k\geq 1}\mathfrak{g}^{-k,k}$. Then one gets the holomorphic isomorphism $\mathfrak{g}/\mathfrak{b}\cong \mathfrak{n}_+$. We take the unipotent group $N_+=\exp(\mathfrak{n}_+)$.

As $\text{Ad}(g)(\mathfrak{g}^{k, -k})$ is in $\bigoplus_{i\geq k}\mathfrak{g}^{i, -i} \text{ for each } g\in B,$ the sub-Lie algebra $\mathfrak{b}\oplus \mathfrak{g}^{-1, 1}/\mathfrak{b}\subseteq \mathfrak{g}/\mathfrak{b}$
defines an Ad$(B)$-invariant subspace. By left translation via $G_{\mathbb{C}}$, $\mathfrak{b}\oplus\mathfrak{g}^{-1,1}/\mathfrak{b}$ gives rise to a $G_{\mathbb{C}}$-invariant holomorphic subbundle of the holomorphic tangent bundle at the base point. It will be denoted by $T^{1,0}_{o, h}\check{D}$, and will be referred to as the holomorphic horizontal tangent bundle at the base point. One can check that this construction does not depend on the choice of the base point. The horizontal tangent subbundle at the base point $o$, restricted to $D$, determines a  subbundle $T_{o, h}^{1, 0}D$ of the holomorphic tangent bundle $T^{1, 0}_oD$ of $D$ at the base point. The $G_{\mathbb{C}}$-invariace of $T^{1, 0}_{o, h}\check{D}$ implies the $G_{\mathbb{R}}$-invariance of $T^{1, 0}_{o, h}D$. 
As another interpretation of this holomorphic horizontal bundle at the base point, one has
\begin{align}\label{horizontal}
T^{1, 0}_{o, h}\check{D}\simeq T^{1, 0}_o\check{D}\cap \bigoplus_{k=1}^n\text{Hom}(F^{k}_p/F^{k+1}_p, F^{k-1}_p/F^{k}_p).
\end{align}
In \cite{schmid1}, Schmid call a holomorphic mapping $\Psi: \,{M}\rightarrow \check{D}$ of a complex manifold $M$ into $\check{D}$ \textit{horizontal} if at each point of $M$, the induced map between the holomorphic tangent spaces takes values in the appropriate fibre $T^{1,0}\check{D}$.
It is easy to see that the period map $\Phi: \, \mathcal{T}\rightarrow D$ is horizontal since $\Phi_*(T^{1,0}_p\mathcal{T})\subseteq T^{1,0}_{o,h}D$ for any $p\in \mathcal{T}$.
Since $D$ is an open set in $\check{D}$, we have the following relation:
\begin{align}\label{n+iso}
T^{1,0}_{o, h}D= T^{1, 0}_{o, h}\check{D}\cong\mathfrak{b}\oplus \mathfrak{g}^{-1, 1}/\mathfrak{b}\hookrightarrow \mathfrak{g}/\mathfrak{b}\cong \mathfrak{n}_+.
\end{align}
\begin{remark}\label{N+inD}With a fixed base point, we can identify $N_+$ with its unipotent orbit in $\check{D}$ by identifying an element $c\in N_+$ with $[c]=cB$ in $\check{D}$; that is, $N_+=N_+(\text{ base point })\cong N_+B/B\subseteq\check{D}.$ In particular, when the base point $o$ is in $D$, we have $N_+\cap D\subseteq D$. 
              \end{remark}
Let us introduce the notion of an adapted basis for the given Hodge decomposition or the Hodge filtration.
For any $p\in \mathcal{T}$ and $f^k=\dim F^k_p$ for any $0\leq k\leq n$, we call a basis $$\xi=\left\{ \xi_0, \xi_1, \cdots, \xi_N, \cdots, \xi_{f^{k+1}}, \cdots, \xi_{f^k-1}, \cdots,\xi_{f^{2}}, \cdots, \xi_{f^{1}-1}, \xi_{f^{0}-1} \right\}$$ of $H^n(M_p, \mathbb{C})$ an \textit{adapted basis for the given Hodge decomposition}
$$H^n(M_p, {\mathbb{C}})=H^{n, 0}_p\oplus H^{n-1, 1}_p\oplus\cdots \oplus H^{1, n-1}_p\oplus H^{0, n}_p, $$
if it satisfies $
H^{k, n-k}_p=\text{Span}_{\mathbb{C}}\left\{\xi_{f^{k+1}}, \cdots, \xi_{f^k-1}\right\}$ with $\dim H^{k,n-k}_p=f^k-f^{k+1}$.
We call a basis 
\begin{align*}
\zeta=\{\zeta_0, \zeta_1, \cdots, \zeta_N, \cdots, \zeta_{f^{k+1}}, \cdots, \zeta_{f^k-1}, \cdots, \zeta_{f^2}, \cdots, \zeta_{f^1-0}, \zeta_{f^0-1}\}
\end{align*}
of $H^n(M_p, \mathbb{C})$ an \textit{adapted basis for the given filtration} 
\begin{align*}
F^n\subseteq F^{n-1}\subseteq\cdots\subseteq F^0
\end{align*}
if it satisfies $F^{k}=\text{Span}_{\mathbb{C}}\{\zeta_0, \cdots, \zeta_{f^k-1}\}$ with $\text{dim}_{\mathbb{C}}F^{k}=f^k$. 
Moreover, unless otherwise pointed out, the matrices in this paper are $m\times m$ matrices, where $m=f^0$. The blocks of the $m\times m$ matrix $T$ is set as follows:
for each $0\leq \alpha, \beta\leq n$, the $(\alpha, \beta)$-th block $T^{\alpha, \beta}$ is
\begin{align}\label{block}
T^{\alpha, \beta}=\left[T_{ij}(\tau)\right]_{f^{-\alpha+n+1}\leq i \leq f^{-\alpha+n}-1, \ f^{-\beta+n+1}\leq j\leq f^{-\beta+n}-1},
\end{align} where $T_{ij}$ is the entries of
the matrix $T$, and $f^{n+1}$ is defined to be zero. In particular, $T =[T^{\alpha,\beta}]$ is called a \textit{block lower triangular matrix} if
$T^{\alpha,\beta}=0$ whenever $\alpha<\beta$.
\begin{remark}\label{n+}
We remark that by fixing a base point, we can identify the above quotient Lie groups or Lie algebras with their orbits in the corresponding quotient Lie algebras or Lie groups. For example, $\mathfrak{n}_+\cong \mathfrak{g}/\mathfrak{b}$, $\mathfrak{g}^{-1,1}\cong\mathfrak{b}\oplus \mathfrak{g}^{-1,1}/\mathfrak{b}$, and $N_+\cong N_+B/B\subseteq \check{D}$.
We can also identify a point $\Phi(p)=\{ F^n_p\subseteq F^{n-1}_p\subseteq \cdots \subseteq F^{0}_p\}\in D$ with its Hodge decomposition $\bigoplus_{k=0}^n H^{k, n-k}_p$, and thus with any fixed adapted basis of the corresponding Hodge decomposition for the base point, we have matrix representations of elements in the above Lie groups and Lie algebras. For example, elements in $N_+$ can be realized as nonsingular block lower triangular matrices with identity blocks in the diagonal; elements in $B$ can be realized as nonsingular block upper triangular matrices. 
\end{remark}

 We shall review and collect some facts about the structure of simple Lie algebra $\mathfrak{g}$ in our case. Again one may refer to \cite{GS} and \cite{schmid1} for more details. Let $\theta: \,\mathfrak{g}\rightarrow \mathfrak{g}$ be the Weil operator, which is defined by 
\begin{align*}\theta(X)=(-1)^p X\quad \text{ for } X\in \mathfrak{g}^{p,-p}.
\end{align*}
Then $\theta$ is an involutive automorphism of $\mathfrak{g}$, and is defined over $\mathbb{R}$. The $(+1)$ and $(-1)$ eigenspaces of $\theta$ will be denoted by $\mathfrak{k}$ and $\mathfrak{p}$ respectively. Moreover, set 
\begin{align*}
\mathfrak{k}_0=\mathfrak{k}\cap \mathfrak{g}_0, \quad \mathfrak{p}_0=\mathfrak{p}\cap \mathfrak{g}_0. 
\end{align*}The fact that $\theta$ is an involutive automorphism implies 
\begin{align*}
\mathfrak{g}=\mathfrak{k}\oplus\mathfrak{p}, \quad \mathfrak{g}_0=\mathfrak{k}_0\oplus \mathfrak{p}_0, \quad
[\mathfrak{k}, \,\mathfrak{k}]\subseteq \mathfrak{k}, \quad [\mathfrak{p},\,\mathfrak{p}]\subseteq\mathfrak{p}, \quad [\mathfrak{k}, \,\mathfrak{p}]\subseteq \mathfrak{p}.
\end{align*}
Let us consider  
$\mathfrak{g}_c=\mathfrak{k}_0\oplus \sqrt{-1}\mathfrak{p}_0.$
Then $\mathfrak{g}_c$ is a real form for $\mathfrak{g}$. 
Recall that the killing form $B(\cdot, \,\cdot)$ on $\mathfrak{g}$ is defined by
\begin{align*}B(X,Y)=\text{Trace}(\text{ad}(X)\circ\text{ad}(Y)) \quad \text{for } X,Y\in \mathfrak{g}. 
\end{align*}
A semisimple Lie algebra is compact if and only if the Killing form is negative definite. Thus it is not hard to check that $\mathfrak{g}_c$ is actually a compact real form of $\mathfrak{g}$, while $\mathfrak{g}_0$ is a non-compact real form. 
Recall that $G_{\mathbb{R}}\subseteq G_{\mathbb{C}}$ is the subgroup which correpsonds to the subalgebra $\mathfrak{g}_0\subseteq\mathfrak{g}$. Let us denote the connected subgroup $G_c\subseteq G_{\mathbb{C}}$ which corresponds to the subalgebra $\mathfrak{g}_c\subseteq\mathfrak{g}$. Let us denote the complex conjugation of $\mathfrak{g}$ with respect to the compact real form by $\tau_c$, and the complex conjugation of $\mathfrak{g}$ with respect to the compact real form by $\tau_0$. 

The intersection $K=G_c\cap G_{\mathbb{R}}$ is then a compact subgroup of $\mathbb{G}_{\mathbb{R}}$, whose Lie algebra is $\mathfrak{k}_0=\mathfrak{g}_{\mathbb{R}}\cap \mathfrak{g}_c$. 
With the above notations, Schmid showed in \cite{schmid1} that $K$ is a maximal compact subgroup of $G_{\mathbb{R}}$, and it meets every connected component of $G_{\mathbb{R}}$. Moreover, $V=G_{\mathbb{R}}\cap B\subseteq K$. 
As remarked in $\S1$ in \cite{GS} of Griffiths and Schmid, one gets that $\mathfrak{v}$ must have the same rank of $\mathfrak{g}$ as $\mathfrak{v}$ is  the intersection of the two parabolic subalgebras $\mathfrak{b}$ and $\tau_c(\mathfrak{b})$. Moreover, $\mathfrak{g}_0$ and $\mathfrak{v}_0$ are also of equal rank, since they are real forms of $\mathfrak{g}$ and $\mathfrak{v}$ respectively. Therefore, we have the following proposition. 
\begin{proposition}\label{cartaninK}There exists a Cartan subalgebra $\mathfrak{h}_0$ of $\mathfrak{g}_0$ such that $\mathfrak{h}_0\subseteq \mathfrak{v}_0\subseteq \mathfrak{k}_0$ and $\mathfrak{h}_0$ is also a Cartan subalgebra of $\mathfrak{k}_0$. \end{proposition} 
Proposition \ref{cartaninK} implies that the simple Lie algebra $\mathfrak{g}_0$ in our case is a simple Lie algebra of first category as defined in $\S4$ in \cite{Sugi}. In the upcoming part, we will briefly derive the result of a simple Lie algebra of first category in Lemma 3 in \cite{Sugi1}. One may also refer to \cite{Xu} Lemma 2.2.12 at pp.~141-142 for the same result. 

Let us still use the above notations of the Lie algebras we consider. By Proposition 4, we can take $\mathfrak{h}_0$ to be a Cartan subalgebra of $\mathfrak{g}$ such that $\mathfrak{h}_0\subseteq \mathfrak{v}_0\subseteq\mathfrak{k}_0$ and $\mathfrak{h}_0$ is also a Cartan subalgebra of $\mathfrak{k}_0$. Let us denote $\mathfrak{h}$ to be the complexification of $\mathfrak{h}_0$. Then $\mathfrak{h}$ is a Cartan subalgebra of $\mathfrak{g}$ such that $\mathfrak{h}\subseteq \mathfrak{v}\subseteq \mathfrak{k}$. 

Write $\mathfrak{h}_0^*=\text{Hom}(\mathfrak{h}_0, \mathbb{R})$ and $\mathfrak{h}^*_{_\mathbb{R}}=\sqrt{-1}\mathfrak{h}^*_0. $ Then $\mathfrak{h}^*_{_\mathbb{R}}$ can be identified with $\mathfrak{h}_{_\mathbb{R}}:=\sqrt{-1}\mathfrak{h}_0$ by duality using the restriction of the Killing form $B$ of $\mathfrak{g}$ to $\mathfrak{h}_{_\mathbb{R}}$. 
 Let $\rho\in\mathfrak{h}_{_{\mathbb{R}}}^*\simeq \mathfrak{h}_{_{\mathbb{R}}}$, one can define the following subspace of $\mathfrak{g}$
\begin{align*}
\mathfrak{g}^{\rho}=\{x\in \mathfrak{g}| [h, \,x]=\rho(h)x\quad\text{for all } h\in \mathfrak{h}\}.
\end{align*}
An element $\varphi\in\mathfrak{h}_{_{\mathbb{R}}}^*\simeq\mathfrak{h}_{_{\mathbb{R}}}$ is called a root of $\mathfrak{g}$ with respect to $\mathfrak{h}$ if $\mathfrak{g}^\varphi\neq \{0\}$. 

Let $\Delta\subseteq \mathfrak{h}^{*}_{_\mathbb{R}}\simeq\mathfrak{h}_{_{\mathbb{R}}}$ denote the space of nonzero $\mathfrak{h}$-roots. Then each root space
\begin{align*}
\mathfrak{g}^{\varphi}=\{x\in\mathfrak{g}|[h,x]=\varphi(h)x \text{ for all } h\in \mathfrak{h}\}
\end{align*} belongs to some $\varphi\in \Delta$ is one-dimensional over $\mathbb{C}$, generated by a root vector $e_{_{\varphi}}$. 

Since the involution $\theta$ is a Lie-algebra automorphism fixing $\mathfrak{k}$, we have $[h, \theta(e_{_{\varphi}})]=\varphi(h)\theta(e_{_{\varphi}})$ for any $h\in \mathfrak{h}$ and $\varphi\in \Delta.$ Thus $\theta(e_{_{\varphi}})$ is also a root vector belonging to the root $\varphi$, so $e_{_{\varphi}}$ must be an eigenvector of $\theta$. It follows that there is a decomposition of the roots $\Delta$ into $\Delta_{\mathfrak{k}}\cup\Delta_{\mathfrak{p}}$ of compact roots and non-compact roots with root spaces $\mathbb{C}e_{_{\varphi}}\subseteq \mathfrak{k}$ and $\mathfrak{p}$ respectively. 
The adjoint representation of $\mathfrak{h}$ on $\mathfrak{g}$ determins a decomposition 
\begin{align*}
\mathfrak{g}=\mathfrak{h}\oplus \sum_{\varphi\in\Delta}\mathfrak{g}^{\varphi}.
\end{align*}There also exists a Weyl base $\{h_i, 1\leq i\leq l;\,\,e_{_{\varphi}}, \text{ for any } \varphi\in\Delta\}$ with $l=\text{rank}(\mathfrak{g})$ such that
$\text{Span}_{\mathbb{C}}\{h_1, \cdots, h_l\}=\mathfrak{h}$, $\text{Span}_{\mathbb{C}}\{e_{\varphi}\}=g^{\varphi}$ for each $\varphi\in \Delta$, and
\begin{align*}
&\tau_c(h_i)=\tau_0(h_i)=-h_i, \quad \text{ for any }1\leq i\leq l;\\ \tau_c(e_{_{\varphi}})=&\tau_0(e_{_{\varphi}})=-e_{_{-\varphi}} \quad \text{for any } \varphi\in \Delta_{\mathfrak{k}};\quad\tau_0(e_{_{\varphi}})=-\tau_c(e_{_{\varphi}})=e_{_{\varphi}} \quad\text{for any }\varphi\in \Delta_{\mathfrak{p}}.
\end{align*}With respect to this Weyl base, we have 
\begin{align*}
\mathfrak{k}_0&=\mathfrak{h}_0+\sum_{\varphi\in \Delta_{\mathfrak{k}}}\mathbb{R}(e_{_{\varphi}}-e_{_{-\varphi}})+\sum_{\varphi\in\Delta_{\mathfrak{k}}} \mathbb{R}\sqrt{-1}(e_{_{\varphi}}+e_{_{-\varphi}});\\
\mathfrak{p}_0&=\sum_{\varphi\in \Delta_{\mathfrak{p}}}\mathbb{R}(e_{_{\varphi}}+e_{_{-\varphi}})+\sum_{\varphi\in\Delta_{\mathfrak{p}}} \mathbb{R}\sqrt{-1}(e_{_{\varphi}}-e_{_{-\varphi}}).
\end{align*}

Let us now introduce a lexicographic order (cf. pp.41 in \cite{Xu} or pp.416 in \cite{Sugi}) in the real vector space $\mathfrak{h}_{_{\mathbb{R}}}$ as follows: we fix an ordered basis $e_1, \cdots, e_l$ for $\mathfrak{h}_{_{\mathbb{R}}}$. Then for any $h=\sum_{i=1}^l\lambda_ie_i\in\mathfrak{h}_{_{\mathbb{R}}}$, we call $h>0$ if the first nonzero coefficient is positive, that is, if $\lambda_1=\cdots=\lambda_k=0, \lambda_{k+1}>0$ for some $1\leq k<l$. For any $h, h'\in\mathfrak{h}_{_{\mathbb{R}}}$, we say $h>h'$ if $h-h'>0$, $h<h'$ if $h-h'<0$ and $h=h'$ if $h-h'=0$.
Now let us first choose a \textit{maximal} linearly independent subset $S=\{s_1, \cdots, s_k\}$ of $\Delta_{\mathfrak{k}}$, and then choose a linearly independent subset $E=\{e_1, \cdots, e_{l-k}\}$ of $\Delta_{\mathfrak{p}}$ such that $E\cup S$ is a basis for $\mathfrak{h}^*_{\mathbb{R}}$, where $l$ is the real dimension of $\mathfrak{h}^*_{\mathbb{R}}$. Now we order this basis $E\cup S$ as $\{e_1, \cdots, e_{l-k}, s_1, \cdots, s_k\}$, namely, we put the noncompact roots in front of the compact ones. Then we define the above lexicographic order in $\mathfrak{h}^*_{\mathbb{R}}\simeq \mathfrak{h}_{\mathbb{R}}$. 
Then we define $\Delta^{\pm}$, $\Delta^{\pm}_{\mathfrak{p}}$, and $\Delta^{\pm}_{\mathfrak{k}}$.
\begin{definition}Two different roots $\varphi, \psi\in \Delta$ are said to be strongly orthogonal if and only if $\varphi\pm\psi\notin\Delta\cup \{0\}$, which is denoted by $\varphi \independent\psi$. 
\end{definition}
For the real simple Lie algebra $\mathfrak{g}_0=\mathfrak{k}_0\oplus\mathfrak{p}_0$ which has a Cartan subalgebra $\mathfrak{h}_0$ in $\mathfrak{k}_0$, the maximal abelian subspace of $\mathfrak{p}_0$ can be described as in the following lemma, which is a slight extension of a lemma of Harish-Chandra in \cite{HC}. One may refer to Lemma 3 in \cite{Sugi1} or Lemma 2.2.12 at pp.141--142 in \cite{Xu} for more details. For reader's convenience we give the detailed proof.
\begin{lemma}\label{stronglyortho}There exists a set of strongly orthogonal noncompact positive roots $\Lambda=\{\varphi_1, \cdots, \varphi_r\}\subseteq\Delta^+_{\mathfrak{p}}$ such that 
\begin{align*}
\mathfrak{a}_0=\sum_{i=1}^r\mathbb{R}\left(e_{_{\varphi_i}}+e_{_{-\varphi_i}}\right)
\end{align*} 
is a maximal abelian subspace in $\mathfrak{p}_0$. 
\end{lemma}
This lemma is a slight extension of a lemma of Harish-Chandra in \cite{HC}. One may refer to Lemma 3 in \cite{Sugi1} or Lemma 2.2.12 at pp.141--142 in \cite{Xu} for more details. One may also find the detailed proof in \cite{LS}.


\end{document}